\newtheorem{thm}{Theorem}[section]
\newtheorem{defn}{Definition}[section]
\newtheorem{prop}{Proposition}[section]
\newtheorem{lem}{Lemma}[section]
\newtheorem{rem}{Remark}[section]
\newtheorem{cor}{Corollary}[section]
\newtheorem{eg}{Example}[section]
\journal{  }
\begin{document}
\begin{frontmatter}

\title{Equivariant cohomology, deformations and central extension of Hom Lie triple systems}

\author{Rinkila Bhutia}
\ead{rbhutia@cus.ac.in}
\address{Sikkim University, Gangtok, Sikkim, 737102, \textsc{India}}

\author{ RB Yadav \fnref{}\corref{mycorrespondingauthor}}
\cortext[mycorrespondingauthor]{Corresponding author}
\ead{rbyadav15@gmail.com, ryadav2@gitam.edu}
\address{GITAM(Demmed to be University), Bengaluru, 562103, \textsc{India}}

\author{Namita Behera\corref{mycoauthor}}
\ead{nbehera@cus.ac.in}

\address{Sikkim University, Gangtok, Sikkim, 737102, \textsc{India}}


\begin{abstract}
In this paper, we study equivariant cohomolgy theory of Hom Lie triple systems. Using this cohomology, we study 1-parameter formal deformation and central extensions of Hom Lie triple systems in the equivariant context.
\end{abstract}

\begin{keyword}
\texttt{Hom-Lie triple system, Group actions, equivariant cohomology, equivariant formal deformations, equivariant central extension}
\MSC[2020] 17A40 \sep 17B10 \sep 17B56 \sep 55U15  \sep 14D15 \sep 55S91 \sep 55N91
\end{keyword}

\end{frontmatter}

\section{Introduction}
A Hom structure on a Lie triple system twists the ternary operation of the system. The concept of Hom Lie algebras, was introduced by Hartwig, Larsson and Silverstrov \cite{Hart}. Since then, algebras with hom type structure has been studied by many authors. Yau \cite{Yau} generalises Lie triple system to define Hom Lie triple system as a particular case of ternary Hom-Nambu algebras.

Algebraic deformation theory was introduced by Gerstenhaber for rings and algebras \cite{MG1, MG2,MG3,MG4, MG5}. This theory studies an object by deforming it into a family of similar objects, depending on a parameter. Kubo and Taniguchi \cite{Kub-Tani} studies deformation theory for Lie triple systems. The notion of 1-parameter formal deformation of Hom Lie triple system has been studied by Ma, Chen and Lin in\cite{Ma-Chen-Lin}. In this, they show that the theory of 1-parameter formal deformation of Hom Lie triple system is governed by the cohomology groups considered for the deformation. They also introduce the notion of central extensions of Hom Lie triple system and establish a correspondence 
between equivalent classes of central extensions of Hom Lie triple systems and the third cohomolgy group.

The idea of formal 1-parameter deformation equipped with a group action was introduced in \cite{gmrb} for associative algebras to give what is known as equivariant deformation. \cite{RBY-NB-RB} studies equivariant formal deformation of Lie triple system.

In this paper, we consider the notions of deformation cohomolgy, central extensions and formal deformation theory of Hom Lie triple system in the equivariant context. The scheme of the paper is as follows:
Section 2 recalls the definition and related concepts of Hom Lie triple system. In Section 3, group actions and equivariant cohomolgy for Hom Lie triple system is studied.  In Section 4, the concept of equivariant central extensions of Hom Lie triple system is introduced. In this section we prove the existence of a one to one correspondence between equivalent classes of equivariant central extensions and the third equivariant cohomology group.
 Section 5 introduces equivariant deformation of Hom Lie triple system. We show that obstructions to equivariant deformation are equivariant cocycles.  In Section 6,  equivalence of deformations and rigidity of Hom Lie triple system are studied in the equivariant context. Finally in Section \ref{HLTSE}, we discuss an example of equivariant deformation of Hom-Lts.

\section{Hom Lie Triple System}

\begin{defn}
   A \textit{ Lie triple system} (\textit{ Lts}) is a vector space $T$ over $k$ with a $k$-linear map $\mu:T\otimes T\otimes T\to T$  satisfying   (if we write $\mu(a\otimes b\otimes c)=[abc]$)
 \begin{equation}\label{LT1}
    [aab]=0,
  \end{equation}
 \begin{equation}\label{LT2}
    [abc]+[bca]+[cab]=0,
  \end{equation}
   \begin{equation}\label{LT3}
    [ab[cde]]=[[abc]de]+[c[abd]e]+[cd[abe]],
  \end{equation}
  for $a,b,c,d,e\in T.$
$[\;]$ is called the ternary operation of the Lie triple system $T$.
\end{defn}

\begin{defn}
A \textit{Hom Lie triple system} (\textit{Hom Lts}) is a vector space $T$ over $k$ with a $k$-linear map $\mu:T\otimes T\otimes T\to T$  and linear maps $\alpha_1, \alpha_2:T \to T$, called the twisted maps, satisfying   
\begin{equation}\label{HLT1}
    [aac]=0,
  \end{equation}
 \begin{equation}\label{HLT2}
    [abc]+[bca]+[cab]=0,
  \end{equation}
   \begin{equation}\label{HLT3}
    [\alpha_1(a) \alpha_2(b)[cde]]=[[abc]\alpha_1(d)\alpha_2(e)]+[\alpha_1(c)[abd]\alpha_2(e)]+[\alpha_1(c)\alpha_2(d)[abe]],
  \end{equation}
  where  a,b,c,d,e are in $T$. We write this as $(T,[~~], \alpha)=(\alpha_1, \alpha_2)$
\end{defn}
We say the \textit{Hom Lts} \textit {T} is multiplicative if
\begin{equation}
  \alpha_1=\alpha_2=\alpha
  \end{equation} and 
 \begin{equation}
   \alpha [abc]=[\alpha(a)\alpha(b)\alpha(c)].
  \end{equation}
Hereon our twisted map $\alpha$ is considered multiplicative and we denote our \textit{Hom Lts} as $(T,[~~],\alpha)$.
\begin{defn}
A morphism $f:(T,[~~ ],\alpha) \to (T',[~~ ]',\alpha')$ of multiplicative \textit{Hom Lts} is a linear map such that 
\begin{equation}
  f [abc]=[f(a)f(b)f(c)]'.
  \end{equation}
 \begin{equation}
  f \alpha=\alpha ' f.
  \end{equation}
\end{defn}
When $\alpha$ is the identity morphism $I_T$, T is a \textit{Lts}.

\begin{eg}\label{eg1}
Let $V$ be a $k$-module and $\mu:V\times V\to k$ be a bilinear map such that $\mu(x,y)=\mu(y,x),~\forall x,y\in V$.
Suppose $\alpha: V \to V$ is a linear map that is invariant with respect to the bilinear map $\mu$  in the sense that $\mu(x,y)=\mu(\alpha(x),\alpha(y))$.
Then for any scalar $\lambda \in k$, $ (V, [~~],\alpha)$
with
$[xyz] = \lambda (\mu(y, z)\alpha(x)-\mu(z, x)\alpha(y))$ is a Hom Lie triple system.
\end{eg}

\begin{eg}\label{eg2}
Let $V=M(p,q),$ be the $k$-module consisting of  all $p\times q$ matrices with entries in a $k$-associative algebra $A$. If $\alpha: A\to A$ is any algebra morphism, then $(V,\alpha \circ [~~], \alpha)$ is a Hom Lie triple system with the ternary 
operation $[\;]$ defined by $[ABC]=(AB^t-BA^t)C+C(B^tA-A^tB)$, for any $A, B, C\in M(p,q)$. Here $A^t$ denotes the transpose of A, for any $A\in M(p,q)$.
\end{eg}
\begin{eg}\label{eg3}
 Suppose $(A, \mu)$ is an associative algebra and $\alpha: A\to A$  is an algebra morphism. Then
$(A, \alpha \circ [~~ ], \alpha)$
is a  Hom Lie triple system, where
$[xyz]=  2[[x, y], z]- [[z, x], y] -[[y, z], x]$. Here$ [~~]$ is the commutator bracket of $\mu$.
\end{eg}

\begin{eg}\label{eg4}
\cite{Yau} Suppose $(A,\mu, \alpha)$ is a Hom associative algebra. Then
$ (A, [~~], \alpha^2)$
is a Hom Lie triple system, where
$[xyz] = \mu(x,y)\alpha(z) -\mu(y,x)\alpha(z)-\mu (z,x)\alpha(y) + \mu(z,y)\alpha(x)$
for $x, y, z\in A$.
\end{eg}

\subsection{Representation of Hom Lie Triple Systems}

\begin{defn}
Let $(T,[~~],\alpha)$ be a \textit{Hom Lts}, V a $ k$ vector space and $A\in$ End(V).
We say that  $V$ is a \textit{module} over  $T$ with respect to $A\in End(V)$  provided that\\
 $E_V := T \oplus V$ possesses the structure of a multiplicative Hom Lie triple system with the twisting map $\beta=\alpha+A$ where $\beta \restriction T= \alpha$ and $\beta \restriction V= A$ such that: \\
(a) $T$ is a Hom Lie triple subsystem of $E_V$,\\
 (b) for $a, b, c \in E_V$, $[a, b, c] \in V$ if any one of a, b, c lies in $V$, and\\
 (c) [a, b, c] = 0 if any two of a, b, c lie in $V$.\\
 We also say that $V$ is a $T$-module with respect to $A$.
\end{defn}
Equivalently we have the following:

\begin{thm} Let $(T,[~~],\alpha)$ be a \textit{Hom Lts}, V an $k$ vector space and $A\in$ End(V). Then $V$ is a $T$-module with respect to $A$  iff there exists a $\theta :T\times T\to End(V)$, a bilinear map, such that $\forall a,b,c,d \in T$
\begin{equation}
\theta ( \alpha (a),\alpha(b))A=A\theta (a,b) \label{HLTS1}
\end{equation}
\begin{equation}
\theta(\alpha(c),\alpha(d))\theta(a,b)-\theta(\alpha(b),\alpha(d))\theta(a,c) - \theta(\alpha(a),[bcd])A+D(\alpha(b),\alpha(c))\theta(a,d)=0 \label{HLTS2}
\end{equation}

\begin{equation}
\theta(\alpha(c),\alpha(d))D(a,b)-D(\alpha(a),\alpha(b))\theta(c,d) +\theta([abc],\alpha(a))A+\theta(\alpha(c),[abd])A=0 \label{HLTS3}
\end{equation}
where $D(a,b)=\theta (b,a)-\theta(a,b)$.\\

We also call $(V,\theta)$, the representation of the \it{Hom Lts T} with respect to $A$.
\end{thm}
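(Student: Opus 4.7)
The plan is to prove the equivalence by transferring the Hom Lts structure on $E_V = T \oplus V$ back and forth to bilinear data on $T$ valued in $\mathrm{End}(V)$. In the forward direction I extract $\theta$ from the bracket on $E_V$ and derive (HLTS1)--(HLTS3); in the reverse direction I build the bracket on $E_V$ from $\theta$ and verify the axioms (HLT1)--(HLT3). The key observation that makes this work is that conditions (b) and (c) in the definition of a module, together with (HLT2), force every mixed bracket with exactly one argument in $V$ to be expressible in terms of one canonical bilinear map.

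For $(\Rightarrow)$, assume $(E_V,[~~],\beta)$ is the given Hom Lts with $\beta=\alpha+A$. For $a,b\in T$ and $v\in V$, I define $\theta(a,b)(v):=[v,a,b]$; by (b) and (c) this lies in $V$, so $\theta(a,b)\in\mathrm{End}(V)$. Using (HLT2) applied to the triple $(a,b,v)$ and its permutations, together with $[abv]+[bva]+[vab]=0$, I show that the bracket $[a,b,v]$ is precisely $D(a,b)v=(\theta(b,a)-\theta(a,b))v$, and similarly obtain expressions for $[a,v,b]$ and $[v,b,a]$. Identity (HLTS1) then drops out of the multiplicativity of $\beta$: $A\theta(a,b)v=\beta[v,a,b]=[\beta v,\beta a,\beta b]=\theta(\alpha(a),\alpha(b))Av$. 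Identities (HLTS2) and (HLTS3) are obtained by writing (HLT3) on $E_V$ with exactly one of the five slots occupied by a vector $w\in V$; depending on which slot carries $w$, one gets an identity in $\mathrm{End}(V)$, and after rewriting all mixed brackets in terms of $\theta$ and $D$, the resulting equations are precisely (HLTS2) and (HLTS3).

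For $(\Leftarrow)$, given $\theta$ satisfying (HLTS1)--(HLTS3), I define a ternary bracket on $E_V=T\oplus V$ that restricts to the given bracket on $T$, vanishes on any triple with two or more entries in $V$, and on a triple with exactly one $V$-entry is given by $[a,b,v]=D(a,b)v$, $[v,a,b]=\theta(a,b)v$, and the remaining positions forced by (HLT2); set $\beta:=\alpha\oplus A$. Then (HLT1) and (HLT2) on $E_V$ are immediate from the corresponding axioms in $T$ and the way the mixed brackets were forced, while (HLT3) is checked case by case according to how many arguments are in $V$: the all-$T$ case is the hypothesis on $T$, the two-or-more-in-$V$ case is trivial by (c), and the exactly-one-in-$V$ case reduces, after placing the $V$-entry canonically using (HLT2), to (HLTS2) or (HLTS3). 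Multiplicativity $\beta[x,y,z]=[\beta x,\beta y,\beta z]$ reduces on mixed triples to exactly (HLTS1).

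The main obstacle is the combinatorial bookkeeping in the one-$V$-entry case of (HLT3): there are five positions for the $V$-argument on each side of (HLT3), and one must systematically use the skew-symmetry (HLT2) (together with $D(a,b)=\theta(b,a)-\theta(a,b)$) to bring every occurrence of a mixed bracket into one of the normal forms $D(\cdot,\cdot)$ or $\theta(\cdot,\cdot)$, and then recognise the resulting identity as one of (HLTS2), (HLTS3), or a consequence of them. The forward direction is really the same calculation read in reverse, and it is this single calculation, done carefully, that supplies both implications.
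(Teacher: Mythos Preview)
Your proposal is correct and follows essentially the same approach as the paper: in both directions the bracket on $E_V=T\oplus V$ is tied to $\theta$ via $[v,a,b]=\theta(a,b)v$, $[a,b,v]=D(a,b)v$, and $[a,v,b]=-\theta(a,b)v$, with (HLTS1) arising from multiplicativity of $\beta$ and (HLTS2)--(HLTS3) arising from (HLT3) with exactly one $V$-entry. The paper merely writes out the verifications of (HLTS1)--(HLTS3) explicitly and states the converse construction tersely, whereas you spell out the case analysis more fully; the underlying argument is the same.
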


\begin{proof}Suppose we have an appropriate $\theta$ satisying \ref{HLTS1}, \ref{HLTS2} and \ref{HLTS3}. Define \\
 $[~~]_V:(T\oplus V)\times(T\oplus V)\times (T\oplus V)\to T\oplus V$ by
$$[(a,u)(b,v)(c,w)]_V=([abc],\theta(b,c)(u)-\theta(a,c)(v)+D(a,b)(w))$$ and the twisted map
$\alpha+A:T\oplus V\to T\oplus V$ by
$$ (\alpha +A)(a,u)=(\alpha(a),A(u))$$

With the above $E_V$ becomes a multiplicative \textit{Hom~Lts} satisfying (a), (b) and (c). Thus $V$ is a $T$ module with respect to $A$.  \\

Conversely suppose  $V$ is a $T$-module with respect to $A$. Then define, \\
$\theta :T\otimes T\to End(V)$ as
$\theta (a\otimes b)(v)=[vab]$, for all $a,b\in T$, $v\in V$. With this $\theta$

\begin{enumerate}
\item 
 $\theta ( \alpha (a),\alpha(b))A(v)=\theta ( \beta (a),\beta(b))A(v)=[A(v) \beta(a) \beta(b)]\\
=[\beta(v) \beta(a) \beta(b)]=\beta[v a b]=A[v a b]=A\theta(a,b)v,$ which gives \ref{HLTS1}
\item \begin{scriptsize}
 $\theta(\alpha(c),\alpha(d))\theta(a,b)v-\theta(\alpha(b),\alpha(d))\theta(a,c)v - \theta(\alpha(a),[bcd])Av+D(\alpha(b),\alpha(c))\theta(a,d)v $\\ \end{scriptsize}
$=[[v a b] \alpha(c) \alpha(d)]-[[vac] \alpha(b) \alpha(d)]-[A(v) \alpha(a) [bcd]]+[\alpha(b) \alpha(c) [vad]]\\
=[[v a b] \beta(c) \beta(d)]-[[vac] \beta(b) \beta(d)]-[\beta(v) \beta(a) [bcd]]+[\beta(b) \beta(c) [vad]]\\
=[\beta(v) \beta(a) [bcd]]-[\beta(v) \beta(a) [bcd]]\\
=0$, which gives \ref{HLTS2}
\item \begin{scriptsize}  $\theta(\alpha(c),\alpha(d))D(a,b)v-D(\alpha(a),\alpha(b))\theta(c,d)v +\theta([abc],\alpha(a))Av+\theta(\alpha(c),[abd])Av $\\ \end{scriptsize}
$=[[a b v] \alpha(c) \alpha(d)]-[\alpha(a) \alpha(b) [vcd]]+[A(v) [abc] \alpha(d)]+[A(v) \alpha(c) [abd]]\\
=[[a b v] \beta(c) \beta(d)]-[\beta(a) \beta(b) [vcd]]+[A(v) [abc] \beta(d)]+[A(v) \beta(c) [abd]] \\
=[\beta(a) \beta(b) [vcd]]-[\beta(a) \beta(b) [vcd]]\\
=0$, which gives \ref{HLTS3}.
\end{enumerate}

\end{proof}

%
%
%
%
%

\subsection{The Cohomology of Hom-Lts}
\begin{defn}
  Let $T$ be a Hom Lie triple system and $V$ be a module over $T$ wrt $A$ represented by $(V,\theta)$. We define an n-Hom cochain as an  $ f\in Hom_k(T^{\otimes (2n+1)},V),$ satisfying the following:
$$Af(x_1,\cdots, x_{2n+1})=f(\alpha(x_1),\cdots, \alpha(x_n)),$$
   $$f(x_1,\cdots, x_{2n-2},x,x,y)=0~~\text{ and }$$
  $$f(x_1,\cdots, x_{2n-2},x,y,z)+f(x_1,\cdots, x_{2n-2},y,z,x)+f(x_1,\cdots, x_{2n-2},z,x,y)=0.$$
For $n\ge 1$, $C^{2n+1}_{\alpha, A}(T;V)$ is the set of all n-Hom cochains.\\
 The coboundary operator $\delta^{2n-1}:C^{2n-1}_{\alpha,A}(T;V)\to C^{2n+1}_{\alpha,A}(T;V)$ is a k-linear map satisfying
  \begin{scriptsize}
  \begin{eqnarray*}
    &&\delta ^{2n-1}f(x_1,\cdots, x_{2n+1})\\
     &=& \theta(\alpha ^{n-1}(x_{2n}),\alpha ^{n-1}(x_{2n+1}))f(x_1,\cdots,x_{2n-1})\\
     &&-\theta(\alpha ^{n-1}(x_{2n-1}),\alpha^{n-1}(x_{2n+1}))f(x_1,\cdots,x_{2n-2},x_{2n})\\
    &&+\sum_{k=1}^{n}(-1)^{k+n}D(\alpha ^{n-1}(x_{2k-1}),\alpha ^{n-1}(x_{2k}))f(x_1,\cdots,\widehat{ x_{2k-1}}\widehat{x_{2k}}, \cdots, x_{2n+1}) \\
     && +\sum_{k=1}^{n}\sum_{j=2k+1}^{2n+1}(-1)^{n+k+1}f(\alpha(x_1),\cdots,\widehat{ x_{2k-1}}\widehat{x_{2k}}, \cdots,[x_{2k-1}x_{2k}x_j],\cdots, \alpha(x_{2n+1})).
 \end{eqnarray*} 
  \end{scriptsize}
\end{defn}
It is routine to verify that the above is a well defined map. With $\delta$ defined as above, we get the following
\begin{thm}
$\delta^{2n+1}\delta^{2n-1}=0.$
\end{thm}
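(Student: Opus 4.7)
The plan is to verify $\delta^{2n+1}\delta^{2n-1}f = 0$ by direct expansion, exploiting the representation axioms \eqref{HLTS1}, \eqref{HLTS2}, \eqref{HLTS3} together with the Hom-Lts identity \eqref{HLT3}. Writing $\delta^{2n-1}f$ as a sum of four blocks (the two $\theta$-terms, the $D$-sum over $k$, and the inner-bracket double sum over $k,j$), the composition $\delta^{2n+1}\delta^{2n-1}f$ splits into $4\times 4 = 16$ groups of terms. My strategy is to group these so that cancellations occur in pairs or small bundles, each justified by a single axiom.

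First I would handle the "$\theta \circ \theta$" and "$\theta \circ D$" interactions: when the outer $\theta(\alpha^n(x_{2n+2}),\alpha^n(x_{2n+3}))$ meets an inner $\theta$ or $D$, the axioms \eqref{HLTS2} and \eqref{HLTS3} rewrite each such composition as a sum whose terms cancel exactly against corresponding terms from the other blocks; the Hom-equivariance $\theta(\alpha(-),\alpha(-))A = A\theta(-,-)$ in \eqref{HLTS1}, combined with the cocycle compatibility $Af(x_1,\dots,x_{2n-1}) = f(\alpha(x_1),\dots,\alpha(x_{2n-1}))$, is what allows the outer $\alpha^{n-1}$'s to absorb an extra $\alpha$ and match the inner $\alpha^{n-1}$'s. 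Next I would pair up "$\theta$ acting on an inner-bracketed argument" with "$\theta$ applied to a bracket" using the definition of $D = \theta(-,-)^{op} - \theta(-,-)$ and the skew/cyclic symmetry of $f$.

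The heart of the argument is the interaction of the two "inner-bracket" sums: a term of the form $f(\ldots,[x_{2k-1}x_{2k}\,[x_{2l-1}x_{2l}x_j]\ldots)$ produced by applying $\delta$ twice is rewritten via \eqref{HLT3} as a sum of three terms in which the inner bracket is pushed outward. These three expansions match precisely the terms coming from the other ordering of $k$ versus $l$, and the remaining terms — those where the second bracketing hits an argument already inside the first bracket — collapse using the skew and cyclic relations $f(\ldots,x,x,y)=0$ and $f(\ldots,x,y,z)+f(\ldots,y,z,x)+f(\ldots,z,x,y)=0$ imposed on cochains.

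The main obstacle is purely bookkeeping: keeping track of the signs $(-1)^{k+n}$, $(-1)^{n+k+1}$ across two applications of $\delta$, the shifting of $\alpha$-powers from $\alpha^{n-1}$ to $\alpha^{n}$ under $\delta^{2n+1}$, and the reindexing needed when a bracket $[x_{2k-1}x_{2k}x_j]$ occupies the slot previously indexed by $j$. I would organize the calculation by fixing which pair of indices $\{2k-1,2k\}$ and $\{2l-1,2l\}$ plays the outer versus inner role, and within each such fixed pair verify cancellation using exactly one of the identities \eqref{HLT3}, \eqref{HLTS1}, \eqref{HLTS2}, \eqref{HLTS3}. Modulo this combinatorial care, the computation mirrors the classical Yamaguti-type proof for Lie triple systems, with $\alpha$-twists tracked by the Hom-equivariance of $\theta$ and $f$.
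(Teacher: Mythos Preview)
Your outline is the standard direct verification and is correct in spirit: expand $\delta^{2n+1}\delta^{2n-1}f$ into the sixteen interaction blocks, then cancel in bundles governed by \eqref{HLTS1}--\eqref{HLTS3} for the $\theta$- and $D$-interactions and by \eqref{HLT3} for the nested-bracket terms, with the cochain symmetry conditions cleaning up the residue. The Hom-equivariance bookkeeping you describe (shifting $\alpha^{n-1}$ to $\alpha^n$ via $Af=f\circ\alpha^{\otimes}$ and \eqref{HLTS1}) is exactly what is needed to align the twist powers across the two applications of $\delta$.

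However, the paper gives \emph{no proof} of this theorem at all: it is simply asserted following the definition of $\delta$, with the coboundary property taken as known from the literature (the Hom-Lts cohomology here is that of Ma--Chen--Lin \cite{Ma-Chen-Lin}, which in turn adapts Kubo--Taniguchi \cite{Kub-Tani}). So there is nothing to compare your approach against; your proposal is in fact substantially more than what the paper offers. If you carry out the computation you describe, you will have supplied a proof the paper omits.
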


 Thus for {\textit {Hom Lts}} $(T, [~~],\alpha)$, with the above coboundary map, we have a cochain complex with cohomology space denoted by $H_{\alpha.A}(T,V)$.

\section{Group actions and Equivariant Cohomology for \textit{Hom Lts}}
Let $T$ be a \textit{Hom Lts} with its ternary operation $\mu (a\otimes b\otimes c) = [abc]$ and $G$ be a finite group. We say that the group $G$  acts on $T$ from the left if there exists a function
$$\phi:G\times  T \to T\;\;\;\; (g, a)\mapsto \phi(g, a) = ga$$
satisfying the following conditions
\begin{enumerate}
  \item $ex = x$ for all $x \in T$, where $e \in G$ is the group identity.
  \item $g_1(g_2x) = (g_1g_2)x$ for all $g_1, g_2 \in G$ and $x \in T$.
  \item  For every $g \in G$, the left translation $\phi_g = \phi (g, ) : T \to T,$ $a \mapsto ga$ is a linear map.
  \item For all $g \in G$ and $a, b, c\in T$,  $\mu(ga,gb,gc) = g\mu(a, b,c) = g[abc]$, that is, $\mu$ is equivariant with respect to the diagonal action on $T \otimes T\otimes T$.
\item $\alpha(g a)=g\alpha(a)$ $ \forall g\in G$ and $a\in T$
\end{enumerate}
We denote an action as above  by $(G,T)$. We call  the \textit{Hom Lts T}  with an action of a group $G$ as $G$-$Hom~Lts$.

\begin{prop}
  Let $G$ be a finite group and $T$ be a Hom Lie triple system. Then $G$ acts on $T$ if and only if there exists a group homomorphism
  $$ \psi: G \to Iso_{\textit{H-Lts}}(T,T),\;\; g\mapsto  \psi(g) = \phi_g$$
from the group $G$ to the group of Hom Lie triple system isomorphisms from $T$ to $T$.
\end{prop}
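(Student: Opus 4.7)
The plan is to establish the equivalence by constructing the natural map in each direction and verifying it has the required properties, with the key observation being that the five axioms defining a $G$-action on a Hom Lts are precisely the axioms saying each left-translation is a Hom Lts isomorphism, packaged so that the assignment $g \mapsto \phi_g$ respects the group structure.

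For the forward direction, I would assume $G$ acts on $T$ and define $\psi(g) = \phi_g$ where $\phi_g(a) = ga$. I would first verify that $\phi_g \in \mathrm{Iso}_{\textit{H-Lts}}(T,T)$ for each $g \in G$: linearity is axiom (3); the bracket-preservation condition $\phi_g[abc] = [\phi_g(a)\phi_g(b)\phi_g(c)]$ is precisely axiom (4); compatibility with the twisting map $\phi_g \circ \alpha = \alpha \circ \phi_g$ is axiom (5); and $\phi_g$ is invertible with inverse $\phi_{g^{-1}}$ by axioms (1) and (2), since $\phi_g \phi_{g^{-1}}(a) = g(g^{-1}a) = (gg^{-1})a = ea = a$, and similarly on the other side. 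Then the homomorphism property $\psi(g_1 g_2) = \psi(g_1)\psi(g_2)$ is immediate from axiom (2).

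For the reverse direction, given a group homomorphism $\psi$, I would define $\phi(g,a) = \psi(g)(a)$ and verify the five action axioms. Axioms (1) and (2) follow from $\psi(e) = \mathrm{id}_T$ and $\psi(g_1g_2) = \psi(g_1)\psi(g_2)$. Axioms (3), (4), (5) follow from the fact that each $\psi(g)$ is a Hom Lts morphism, hence linear, bracket-preserving, and commuting with $\alpha$.

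There is no real obstacle here: the statement is essentially a tautological unpacking of the action axioms versus the isomorphism axioms. The only mildly subtle point is making sure that left-translations are genuinely bijective (so that they land in $\mathrm{Iso}_{\textit{H-Lts}}(T,T)$ rather than just $\mathrm{End}_{\textit{H-Lts}}(T,T)$), and this is handled uniformly by producing $\phi_{g^{-1}}$ as the two-sided inverse of $\phi_g$.
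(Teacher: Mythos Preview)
Your proposal is correct and follows essentially the same approach as the paper: define $\psi(g)=\phi_g$ in one direction and $\phi(g,a)=\psi(g)(a)$ in the other, then check that the action axioms correspond exactly to each $\phi_g$ being a Hom Lts isomorphism and $\psi$ being a group homomorphism. In fact you supply more detail than the paper does (e.g.\ exhibiting $\phi_{g^{-1}}$ as the inverse of $\phi_g$), whereas the paper simply asserts the verifications are easy.
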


\begin{proof}
  For an action $(G,T)$, we define a map $\psi:G\to Iso_{\textit{H-Lts}}(T,T)$ by $\psi(g)=\phi_g.$ One can verify easily that $\psi$ is a  group homomorphism and $\psi(g)$ is a bijective morphism of \textit{Hom Lts}.
  Now, let $\psi:G\to Iso_{\textit{Lts}}(T,T)$ be a group homomorphism. Define a map $G\times T\to T$ by $(g,a)\mapsto \psi(g)(a).$ It can be easily seen that this is an action of $G$ on the \textit{Hom Lts} $T$.
\end{proof}

\begin{defn}
A G-morphism $f:(T,[ ~~], \alpha)\to (T',[ ~~]', \alpha')$ of \textit{G-Hom Lts} is a linear map such that
\begin{equation}
  f [abc]=[f(a)f(b)f(c)]',  \forall a,b,c\in G, 
  \end{equation}
 \begin{equation}
  f \alpha=\alpha ' f.
  \end{equation}
\begin{equation}
  gf=fg, \forall g\in G.
  \end{equation}
\end{defn}

\begin{defn}
  Let $T$ be \textit{G-Hom Lts}. A \textit{G-module} over $T$ is a module $V$ of $T$ with respect to $A$ such that $G$ acts on $V$, and $\theta$ satisfies
$$\theta(ga,gb)(gv)=g(\theta (a,b)v), ~\forall a,b \in T, g \in G ~ \text{and} ~ v \in V.$$

\end{defn}

Define, $\forall n\ge 0$,
\begin{scriptsize}
$$C_{G(\alpha, A)}^{2n+1}(T;V)=\{c\in C_{\alpha,A}^{2n+1}(T;V): c(gx_1,\cdots, gx_{2n+1})=gc(x_1,\cdots, x_{2n+1}), \text{ for all}\; g\in G\}$$\end{scriptsize} 
An element in $C_{G(\alpha,A)}^{2n+1}(T;V)$ is called an invariant (2n+1)-cochain.  Clearly, $C_{G(\alpha,A)}^{2n+1}(T;V)$ is a vector subspace of $C_{\alpha,A}^{2n+1}(T;V)$.

 We have the following lemma.
\begin{lem}\label{rbnb1}
 $c$ is  an invariant (2n-1)-cochain implies that $\delta^{2n-1}(c)$ is an invariant (2n+1)-cochain.
\end{lem}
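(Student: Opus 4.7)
The plan is to verify $G$-invariance of $\delta^{2n-1}c$ by a direct computation: evaluate $\delta^{2n-1}c$ at a $G$-translated tuple $(gx_1,\dots,gx_{2n+1})$ and use the compatibility of the $G$-action with every piece of structure appearing in the formula for $\delta^{2n-1}$ to factor a $g$ out of each of the four summands.

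First, I would record the basic equivariance facts I need. From $\alpha(gx)=g\alpha(x)$, a short induction gives $\alpha^{n-1}(gx)=g\alpha^{n-1}(x)$ for all $n$. From $g[abc]=[ga,gb,gc]$ applied to the relevant triples, each inserted bracket $[x_{2k-1}x_{2k}x_j]$ transforms as $g[x_{2k-1}x_{2k}x_j]$. The definition of a $G$-module gives $\theta(ga,gb)(gv)=g\,\theta(a,b)v$, and since $D(a,b)=\theta(b,a)-\theta(a,b)$ is a linear combination of $\theta$-terms, it inherits the same equivariance: $D(ga,gb)(gv)=g\,D(a,b)v$. Finally, by hypothesis $c$ is an invariant $(2n-1)$-cochain, so $c(gy_1,\dots,gy_{2n-1})=g\,c(y_1,\dots,y_{2n-1})$ for any $(2n-1)$-tuple.

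With these in hand, I would write out $\delta^{2n-1}c(gx_1,\dots,gx_{2n+1})$ term by term. In the first two summands the $\theta$-argument has the form $\theta(\alpha^{n-1}(gx_i),\alpha^{n-1}(gx_j))$ acting on a value of $c$ on a $(2n-1)$-subtuple of $(gx_1,\dots,gx_{2n})$; applying $\alpha^{n-1}(gx)=g\alpha^{n-1}(x)$, invariance of $c$, and equivariance of $\theta$ pulls a single $g$ out front. The third summand is handled identically via equivariance of $D$. For the fourth summand, $c$ is evaluated on a $(2n-1)$-tuple of the form $(\alpha(gx_1),\dots,[gx_{2k-1}\,gx_{2k}\,gx_j],\dots,\alpha(gx_{2n+1}))$; using equivariance of $\alpha$ and of the ternary bracket, this tuple is exactly $g$ applied coordinate-wise to $(\alpha(x_1),\dots,[x_{2k-1}x_{2k}x_j],\dots,\alpha(x_{2n+1}))$, and $G$-invariance of $c$ again produces a single $g$ out front.

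Collecting the four summands, each one yields $g$ times the corresponding summand in $\delta^{2n-1}c(x_1,\dots,x_{2n+1})$, so by linearity of the $G$-action $\delta^{2n-1}c(gx_1,\dots,gx_{2n+1})=g\,\delta^{2n-1}c(x_1,\dots,x_{2n+1})$, proving invariance. There is no conceptual obstacle here; the only care required is bookkeeping the compatibilities (particularly in the last summand, where one must track that $\alpha$ and the inserted bracket are \emph{both} equivariant), together with a one-line check that $D$ inherits equivariance from $\theta$.
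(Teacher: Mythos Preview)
Your proposal is correct and follows essentially the same approach as the paper's own proof: a direct computation showing that, upon evaluating $\delta^{2n-1}c$ at $(gx_1,\dots,gx_{2n+1})$, the equivariance of $\alpha$, the bracket, $\theta$ (hence $D$), together with invariance of $c$, allow a single $g$ to be factored out of each summand. If anything, your write-up is slightly more explicit than the paper's, which simply expands the formula and asserts the final equality.
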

\begin{proof}
  Let $c \in C_{G(\alpha,A)}^{2n-1}(T;V)$ and $g\in G.$ By definition, we have $$c(gx_1,\cdots,gx_{2n-1})=gc(x_1,\cdots,x_{2n-1}),$$ $\forall (x_1,\cdots,x_{2n-1})\in T^{\otimes(2n-1)}.$ Also,
  \begin{scriptsize}
  \begin{eqnarray}
     &&\delta^{2n-1}(c)(gx_1,\cdots,gx_{2n+1})\notag \\
    &=& \theta(\alpha^{n-1}(gx_{2n}),\alpha^{n-1}(gx_{2n+1}))c(gx_1,\cdots,gx_{2n-1})\notag \\
    &&-\theta(\alpha^{n-1}(gx_{2n-1}),\alpha^{n-1}(gx_{2n+1}))c(gx_1,\cdots,gx_{2n-2},gx_{2n})\notag\\
    &&+\sum_{k=1}^{n}(-1)^{k+n}D(\alpha^{n-1}(gx_{2k-1}),\alpha^{n-1}(gx_{2k}))c(gx_1,\cdots,\widehat{g x_{2k-1}}\widehat{gx_{2k}}, \cdots, gx_{2n+1})\notag \\
     && +\sum_{k=1}^{n}\sum_{j=2k+1}^{2n+1}(-1)^{n+k+1}c(\alpha(gx_1),\cdots,\widehat{ gx_{2k-1}}\widehat{gx_{2k}}, \cdots,[gx_{2k-1}gx_{2k}gx_j],\cdots, \alpha(gx_{2n+1}))\notag\\
     &=&g\delta^{2n-1}(c)(x_1,\cdots,x_{2n+1})
  \end{eqnarray}
    \end{scriptsize}
 Hence, $c \in C_{G(\alpha,A)}^{2n-1}(T;V)$ implies that $\delta^{2n-1}c \in C_{G(\alpha,A)}^{2n+1}(T;V)$.
\end{proof}
This gives us a cochain complex  $(C_{G(\alpha,A)}^{\ast}(T;V), \delta)$ .
We call this cochain complex, \textit{equivariant cochain complex} of \textit{G-Hom Lts} $T$ with coefficients in the $G$-module $V$. The corresponding cohomology is denoted by $H^*_{G(\alpha,A)}(T,V)$. \\
For $V=T$, we denote the cohomology $H^*_{G(\alpha,\alpha)}(T,T)$ by $H^*_{G(\alpha)}(T)$. In sections 5 and 6, we consider the adjoint representation of $T$ with respect to $\alpha$ and the cohomology given by $H^*_{G(\alpha)}(T).$


\section{Equivariant Central Extensions of Hom Lie Triple System}
Consider a \textit{G-Hom Lts} $(T,[~~],\alpha)$. Let $V$ be a $G$-module over $T$ with respect to $A$, with the trivial bilinear map $\theta=0$. Then with the trilinear map, $\mu=0$, $(V,0,A)$ is also a \textit{G-Hom Lts}.\\
A \textit{G-Hom Lts} $(T_c,[~~]_c,\alpha_c)$ is called an \textit{ equivariant central extension} of $(T,[~~],\alpha)$ by $(V,0,A)$, if there exists an exact sequence of \text{G-Hom Lts},
\begin{equation}\label{cenex}
0\to V\overset{i}\to T_c \overset{\pi}\to T\to 0
\end{equation}
and a $G$-morphism $s: T\to T_c$ satisfying $\pi s =id_T$ and $i(v)\subseteq Z(T_c)$, where $Z(T_c)$ is the center of $T_c$ defined as the set consisting of all those $x\in T_c$ such that $[xT_cT_c]_c=0$.\\
Since $i,\pi$ and $s$ are morphisms of \textit{G-Hom Lts}, we have $\alpha_c i=iA$, $\alpha \pi=\pi \alpha_c$, $\alpha_c s=s \alpha $ and $gA=Ag$,$g\alpha_c=\alpha_cg$, $g\alpha=\alpha g$, $\forall g\in G$.\\

Two \textit{central extensions}, $(T_c,[~~]_c,\alpha_c)$ and $(T_c',[~~]_c',\alpha_c')$ of $(T,[~~],\alpha)$ by $(V,0,A)$ are equivalent if there exists a $G$-isomorphism $\phi: T_c\to T_c'$ such that the following diagram commutes
\[\begin{tikzcd}
0\arrow{r} &  V \arrow{r}{i} \arrow{d}{id_v}  &  T_c \arrow{r}{\pi} \arrow{d}{\phi}  &  T\arrow{r}\arrow{d}{id_T} & 0\\
0\arrow{r} &  V \arrow{r}{i'}                          &  T_c\arrow{r}{\pi'}                             &T\arrow{r}                         & 0.
\end{tikzcd}
\]

The following result establishes the relation between equivariant central extension of a \textit{G-Hom Lts} and the cohomology group $H^3_{G(\alpha, A)}(T,V)$.
\begin{thm} The set of equivalent classes of equivariant central extension of \textit{G-Hom Lts} $(T,[~~], \alpha)$ by \textit{G-Hom Lts} $(V,0,A)$ is in one to one correspondence with the cohomology group $H^3_{G(\alpha, A)}(T,V)$.
\end{thm}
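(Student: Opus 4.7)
The plan is to establish the bijection via explicit constructions in both directions, identifying $V$ with $i(V) \subseteq T_c$ throughout. Given an equivariant central extension with $G$-section $s: T \to T_c$, the candidate cocycle is $f_s(a,b,c) = [s(a)\,s(b)\,s(c)]_c - s[abc]$. Since $\pi$ is a $G$-Hom Lts morphism with $\pi s = \mathrm{id}_T$, the right-hand side lies in $\ker\pi = i(V)$, so $f_s$ takes values in $V$. I would then verify that $f_s$ satisfies the three defining conditions of a $3$-cochain: the Hom-compatibility $A f_s(a,b,c) = f_s(\alpha a,\alpha b,\alpha c)$ follows from multiplicativity of $\alpha_c$ together with $\alpha_c s = s\alpha$ and $\alpha_c i = i A$, while the $(x,x,y)$-vanishing and the cyclic identity are inherited from $[\,]_c$ and $[\,]$. $G$-invariance of $f_s$ is immediate from $G$-equivariance of $s$, $i$, $\alpha_c$ and the brackets. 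For the cocycle identity $\delta^3 f_s = 0$, I would expand the Hom-Lts Jacobi identity \eqref{HLT3} for $T_c$ on inputs of the form $s(x_i)$; because $i(V) \subseteq Z(T_c)$, every bracket that contains an $f_s$-valued slot vanishes, and the surviving terms rearrange exactly into the coboundary formula.

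For well-definedness on equivalence classes: if $s'$ is another $G$-section, then $h := s' - s$ is a $G$-equivariant $1$-cochain in $C^1_{G(\alpha,A)}(T,V)$ and, again using centrality of $V$, a direct computation gives $f_{s'} - f_s = \delta^1 h$; if $\phi: T_c \to T_c'$ is an equivalence of extensions, then $\phi\circ s$ is a $G$-section of $T_c'$ with $f_{\phi s} = f_s$, since $\phi$ preserves brackets and fixes $V$. Thus $[T_c] \mapsto [f_s]$ is a well-defined map to $H^3_{G(\alpha,A)}(T,V)$. Conversely, given $f \in Z^3_{G(\alpha,A)}(T,V)$ I set $T_c = T \oplus V$ with
$$[(a,u),(b,v),(c,w)]_c = ([abc],\, f(a,b,c)),\qquad \alpha_c(a,v) = (\alpha(a), A(v)),\qquad g(a,v) = (ga,gv),$$
so that the $\theta$-terms in the general module construction drop out since $\theta = 0$. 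The cochain conditions on $f$ yield \eqref{HLT1}--\eqref{HLT2}, the cocycle identity $\delta^3 f = 0$ yields \eqref{HLT3}, and $G$-invariance of $f$ gives $G$-equivariance of $[\,]_c$. The maps $i(v) = (0,v)$, $\pi(a,v) = a$, and section $s(a) = (a,0)$ then exhibit the central extension; linearity of $f$ (so $f(0,\cdot,\cdot) = 0$) places $i(V)$ in $Z(T_c)$.

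Cohomologous cocycles produce equivalent extensions via $\phi(a,v) := (a, v + h(a))$ whenever $f' - f = \delta^1 h$, and the round trip in either direction is manifestly the identity on equivalence classes with the tautological section. The main obstacle is the verification that the five-variable Hom-Jacobi identity for the constructed $T_c$ is \emph{equivalent} to $\delta^3 f = 0$: this forces a term-by-term match between the expansion of \eqref{HLT3} in $T_c$ and the coboundary formula defining $\delta^{2n-1}$, and it is precisely here that the signs, the $\alpha$-insertions on the unaffected slots, and the $D$-terms (which are built from $\theta$ and hence absent on the $V$-side but present through $[\,]$ on the $T$-side) must all be tracked carefully. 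The $G$-equivariance refinements add only bookkeeping once these bracket-level identities are established.
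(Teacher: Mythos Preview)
Your proposal is correct and follows essentially the same route as the paper: the same section-based cocycle $f_s(a,b,c)=[s(a)s(b)s(c)]_c-s[abc]$, the same $T\oplus V$ construction in the reverse direction with bracket $([abc],f(a,b,c))$, and the same affine isomorphism $(a,v)\mapsto(a,v\pm h(a))$ for cohomologous cocycles. The only organizational difference is that you split the well-definedness argument into two steps (independence of section on a fixed extension, then invariance under $\phi$), whereas the paper handles both at once by comparing $s'$ directly with $\phi\circ s$ via the $1$-cochain $x\mapsto s'(x)-\phi s(x)$; your version is slightly more modular and also makes explicit the round-trip check, which the paper leaves implicit.
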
 
\begin{proof}
 Let \textit{G-Hom Lts} $(T_c,[~~]_c,\alpha_c)$ be the \textit{equivariant central extension} of $(T,[~~],\alpha)$ by $(V,0,A)$, given by the exact sequence 
$0\to V\overset{i}\to T_c \overset{\pi}\to T\to 0$ and the $G$-morphism $s: T\to T_c$.
Define $h:T\times T\times T \to V$ as $ih(x,y,z)=[s(x)s(y)s(z)]_c-s[xyz]$. It can be verified that $h\in C^3_{G(\alpha,A)}(T,V)$ and $\delta^3h=0$. Thus $[h]\in H^3_{G(\alpha, A)}(T,V)$.\\
Further suppose $(T_c,[~~]_c,\alpha_c)$ and $(T_c',[~~]_c',\alpha_c')$ are equivalent equivariant central extensions of of $(T,[~~],\alpha)$ by $(V,0,A)$. We have $G$-isomorphism $\phi:T_c\to T_c'$ and $G$-morphisms $s: T\to T_c$ and $s':T\to T_c'$ satisfying
\[\begin{tikzcd}
0\arrow{r} &  V \arrow{r}{i} \arrow{d}{id_v}  &  T_c \arrow{r}{\pi} \arrow{d}{\phi}  &  T\arrow{r}\arrow{d}{id_T} & 0\\
0\arrow{r} &  V \arrow{r}{i'}                          &  T_c\arrow{r}{\pi'}                             &T\arrow{r}                        & 0.
\end{tikzcd}
\]
where $\phi i=i',\pi=i'\phi$ and $\pi s=\pi' s'=id_{T_c}.$\\
Suppose $h$ and $h'$ be the 3-cocycles in $C^3_{G(\alpha,A)}(T,V)$ corresponding to the central extensions $T_c$ and $T_c'$ respectively. \\
Consider linear map $f:T\to V$ defined as $i'f(x)=s'(x)-\phi s(x),\forall x\in V$.
It can been verified that $if\alpha=i' Af$ and $igf=ifg, \forall g\in G$. Thus we have that $f\in C_{G(\alpha, A)}^3(T,V).$\\
Furher $i'h(x,y,z)=\phi i h(x,y,z)=\phi[s(x)s(y)s(z)]_c-\phi s[xyz]$ and $i(V)\in Z(T_c)$ gives  $[s'(x)s'(y)s'(z)]'_c=\phi[s(x)s(y)s(z)]_c$. Using this we get that $$i'(h'-h)(x,y,z)=-i'f[xyz]=i'\delta^1f(x,y,z).$$ Hence we conclude that $h$ and $h'$ are in the same cohomology class.

Conversely let $h$ be a cocycle in $C^3_{G(\alpha, A)}(T,V)$. Take $T_c=T\oplus V$ with\\
 $[(x,a)(y,b)(z,c)]_c=([xyz],h(x,y,z))$ and $\alpha_c(x,a)=(\alpha(x), A(a)), ~~\forall x,y,z \in T$ and $a,b,c \in V$.
Then $(T_c,[~~]_c,\alpha_c)$ forms a \textit{Hom Lts}.\\
With G-action $G\times T\oplus V\to T\oplus V, ~ (g,(x,a))\mapsto (gx,ga) $, $(T_c,[~~]_c,\alpha_c)$ becomes a \textit {G-Hom Lts}.\\
Taking $i:V\to T_c$, $a\mapsto (0,a)$; $\pi:T_c\to T$, $(x,a)\mapsto x$ and  $s:T\to T_c$, $x\mapsto (x,0)$ we get that $(T_c,[~~]_c,\alpha_c)$ forms an equivariant central extension of $(T,[~~], \alpha)$ by $(V,0,A)$.

Suppose $h$ and $h'$ are cocycles in the same cohomology class. Then $h-h'=\delta^1f$ for some $ f \in C^3_{G(\alpha,A)}(T,V)$  i.e. $(h'-h)(x,y,z)=-f[xyz], \forall x,y,z \in T$.\\
Let $(T_c,[~~]_c,\alpha_c)$ and $(T_c',[~~]'_c,\alpha_c')$ be the equivariant central extension of $(T,[~~], \alpha)$ by $(V,0,A)$ defined as above, using $h$ and $h'$ respectively.\\
The map $\phi : T_c\to T_c'$ defined as $(x,a)\mapsto (x, a-f(x))$ gives a G-isomorphism between the two equivariant central extensions.
\end{proof}


\section{ Equivariant deformation of a Hom Lie triple system}
\begin{defn}\label{rb2}

Let $T$ be a Hom Lie triple system with an action of $G$. We denote the space of all formal power series with coefficients in $T$ by $T[[t]]$.  An \textit{ equivariant formal one-parameter deformation} of a \textit{G-Hom Lts} $T$ is a $k[[t]]$-linear map  $$\mu_t : T[[t]]\otimes T[[t]]\otimes T[[t]]\to T[[t]]$$ satisfying the following properties:
\begin{itemize}
  \item[(a)]  $\mu_t(a,b,c)=\sum_{i=0}^{\infty}\mu_i(a,b,c) t^i$, for all $a,b,c\in T$, where $\mu_i:T\otimes T\otimes T\to T$ are k-linear and $\mu_0(a,b,c)=\mu(a,b,c)=[abc]$ is the original ternary operation on T.
  \item [(b)] For every $g\in G$, $$\mu_i(ga,gb,gc)=g\mu_i(a,b,c), \;\;\forall a,b,c\in T,$$ for every $i\ge 0.$ This is equivalent to saying that $\mu_i\in Hom_k^G(T\otimes T\otimes T,T),$ for all $i\ge 0$
      \item[(c)]

\begin{equation}\label{DLT0}
    \mu_t(\alpha(a),\alpha(b),\alpha(c))=\alpha \mu_t(a,b,c),
  \end{equation}
  \begin{equation}\label{DLT1}
    \mu_t(a,a,b)=0,
  \end{equation}
 \begin{equation}\label{DLT2}
    \mu_t(a,b,c)+\mu_t(b,c,a)+\mu_t(c,a,b)=0,
  \end{equation}
   \begin{eqnarray}\label{DLT3}
    \mu_t(\alpha(a),\alpha(b),\mu_t(c,d,e))=\mu_t(\mu_t(a,b,c),\alpha(d),\alpha(e))+\notag\\
\mu_t(\alpha(c),\mu_t(a,b,d),\alpha(e))+\mu_t(\alpha(c),\alpha(d),\mu_t(a,b,e)),
  \end{eqnarray}
  for all  $a,b,c,d,e\in T$
\end{itemize}

The equations  \ref{DLT0},\ref{DLT1},\ref{DLT2} and \ref{DLT3}  are equivalent to following equations, respectively:
\begin{equation}\label{rbeqn0}
     \mu_r(\alpha(a),\alpha(b),\alpha(c))=\alpha \mu_r(a,b,c) \;\text{for all}\; a,b,c\in T, \;r\ge 0.
  \end{equation}
\begin{equation}\label{rbeqn1}
  \mu_r(a,a,b)=0, \;\text{for all}\; a,b\in T, \;r\ge 0.
  \end{equation}
  \begin{equation}\label{rbeqn2}
    \mu_r(a,b,c)+\mu_r(b,c,a)+\mu_r(c,a,b)=0, \;\text{for all}\; a,b,c\in T, \;r\ge 0.
   \end{equation}
   \begin{eqnarray}\label{rbeqn3}
      &&\sum_{i+j=r}\mu_i(\alpha(a),\alpha(b),\mu_j(c,d,e))\notag\\
      &=& \sum_{i+j=r}\{\mu_i(\mu_j(a,b,c),\alpha(d),\alpha(e))+\mu_i(\alpha(c),\mu_j(a,b,d),\alpha(e)) \notag\\
       && +\mu_i(\alpha(c),\alpha(d),\mu_j(a,b,e))\}; \;\text{for all}\; a,b,c,d,e\in T,\;  r\ge 0
   \end{eqnarray}

\end{defn}

 Now we define equivariant formal deformations  of finite order.

 \begin{defn}\label{rb3}
Let $T$ be a \textit{Hom Lts} with an action of $G$.  An \textit{ equivariant formal one-parameter deformation of order n}  of a \textit{G- Hom Lts} $T$ is a $k[[t]]$-linear map  $$\mu_t : T[[t]]\otimes T[[t]]\otimes T[[t]]\to T[[t]]$$ satisfying the following properties:
\begin{itemize}
  \item[(a)]  $\mu_t(a,b,c)=\sum_{i=0}^{n}\mu_i(a,b,c) t^i$, for all $a,b,c\in T$, where $\mu_i:T\otimes T\otimes T\to T$ are $k$-linear and $\mu_0(a,b,c)=\mu(a,b,c)=[abc]$ is the original ternary operation on T.
  \item [(b)] For every $g\in G$, $$\mu_i(ga,gb,gc)=g\mu_i(a,b,c), \;\;\forall a,b,c\in T,$$ for every $i\ge 0.$ This is equivalent to saying that $\mu_i\in Hom_k^G(T\otimes T\otimes T,T),$ for all $i\ge 0$
      \item[(c)]
\begin{equation}\label{FDLT0}
     \mu_t(\alpha(a),\alpha(b),\alpha(c))=\alpha \mu_t(a,b,c),
  \end{equation}
  \begin{equation}\label{FDLT1}
    \mu_t(a,a,b)=0,
  \end{equation}
 \begin{equation}\label{FDLT2}
    \mu_t(a,b,c)+\mu_t(b,c,a)+\mu_t(c,a,b)=0,
  \end{equation}
   \begin{eqnarray}\label{FDLT3}
\mu_t(\alpha(a),\alpha(b),\mu_t(c,d,e))&=&\mu_t(\mu_t(a,b,c),\alpha(d),\alpha(e))+\mu_t(\alpha(c),\mu_t(a,b,d),\alpha(e))\notag\\
&&+\mu_t(\alpha(c),\alpha(d),\mu_t(a,b,e)),
   \end{eqnarray}
  for all  $a,b,c,d,e\in T$
\end{itemize}
\end{defn}

\begin{rem}\label{rbrem1}
  \begin{itemize}
    \item For $r=0$, conditions \ref{rbeqn0}-\ref{rbeqn3} are equivalent to the fact that $T$ is a Hom Lie triple system.
    \item For $r=1$, conditions  \ref{rbeqn0}-\ref{rbeqn3} are equivalent to saying that $\mu_1$ is a 3-cocycle in  $C^3_{G(\alpha)}(T)$ . In general, for $r\ge 0$, $\mu_r$ is just a 3-cochain in $C^3_{G(\alpha)}(T).$
  \end{itemize}
\end{rem}

\begin{defn}
  The 3-cochain  $\mu_1$ in $C^3_{G(\alpha)}(T)$ is called  \textit{infinitesimal} of the equivariant  deformation $\mu_t$. In general, if $\mu_i=0,$ for $1\le i\le n-1$, and $\mu_n$ is a nonzero cochain in  $C^3_{G(\alpha)}(T)$, then $\mu_n$ is called \textit{n-infinitesimal} of the equivariant deformation $\mu_t$.
\end{defn}
\begin{prop}
  The infinitesimal   $\mu_1$ of the equivariant deformation  $\mu_t$ is a 3-cocycle in $C^3_{G(\alpha)}(T).$ In general, n-infinitesimal  $\mu_n$ is a 3-cocycle in $C^3_{G(\alpha)}(T).$
\end{prop}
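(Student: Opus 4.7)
The plan is to show two things: first, that each $\mu_r$ already lies in $C^3_{G(\alpha)}(T)$, and second, that when $\mu_n$ is the $n$-infinitesimal (so $\mu_i = 0$ for $1 \le i \le n-1$), the relation (\ref{rbeqn3}) at $r = n$ collapses to exactly the cocycle condition $\delta^3 \mu_n = 0$.

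For the first point, the equivariance property (b) in Definition \ref{rb2} says that $\mu_r$ is $G$-invariant, and equations (\ref{rbeqn0}), (\ref{rbeqn1}), (\ref{rbeqn2}) taken coefficient-wise at level $r$ say that $\mu_r$ commutes with $\alpha$, vanishes when its first two arguments coincide, and is cyclic-skew in its three arguments. These are exactly the defining conditions of a $G$-invariant Hom $3$-cochain, so $\mu_r \in C^3_{G(\alpha)}(T)$ for every $r \ge 0$.

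For the cocycle condition, I would write out (\ref{rbeqn3}) at $r = n$ under the vanishing hypothesis. Only the index pairs $(i,j) = (0,n)$ and $(n,0)$ contribute, giving an identity in which eight mixed terms survive. In parallel, I would expand $\delta^3 \mu_n$ using the coboundary formula with $2n-1=3$, i.e.\ index $n_{\text{cob}} = 2$, together with the adjoint-representation identifications $\theta(a,b)(v) = [vab]$ and $D(a,b)(v) = [abv]$. The formula for $D$ is obtained from the skew-symmetry $[xyv] = -[yxv]$ (linearization of $[xxv]=0$) and the cyclic relation (\ref{HLT2}): $D(a,b)(v) = \theta(b,a)(v) - \theta(a,b)(v) = [vba] - [vab] = [abv]$. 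A term-by-term comparison then uses the single sign flip $[\alpha(c), \mu_n(a,b,d), \alpha(e)] = -[\mu_n(a,b,d), \alpha(c), \alpha(e)]$ (skew-symmetry in the first two slots) to match the rearrangement of (\ref{rbeqn3}) at $r=n$ identically with $\delta^3 \mu_n (a,b,c,d,e) = 0$.

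The main obstacle is just the bookkeeping: correctly unwinding the signs in the coboundary for $V = T$, $A = \alpha$, and matching eight terms against the eight surviving terms of the deformation identity. No new idea is needed for $n \ge 2$ because the vanishing of lower-order $\mu_i$ kills all cross-terms, reducing the equation at $r = n$ to the same shape as at $r = 1$ with $\mu_n$ in place of $\mu_1$.
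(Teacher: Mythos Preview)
Your proposal is correct and follows the same approach as the paper: the paper's proof simply invokes Remark~\ref{rbrem1}, which asserts without detail that conditions (\ref{rbeqn0})--(\ref{rbeqn3}) at $r=1$ are equivalent to $\mu_1$ being a $3$-cocycle, and then says the case $n>1$ is similar. You are supplying precisely the verification the paper leaves implicit, namely that each $\mu_r$ is an invariant Hom $3$-cochain and that under the vanishing hypothesis the deformation identity at level $r=n$ reduces, via the skew-symmetry flip $[\alpha(c),\mu_n(a,b,d),\alpha(e)]=-[\mu_n(a,b,d),\alpha(c),\alpha(e)]$, to $\delta^3\mu_n=0$.
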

\begin{proof}
  For n=1, proof is obvious from the Remark \ref{rbrem1}. For $n>1$, proof is similar.
\end{proof}

We can write Equations  \ref{rbeqn0}, \ref{rbeqn1}, \ref{rbeqn2} and \ref{rbeqn3} for $r=n+1$ using the definition of coboundary $\delta$ as
\begin{equation}\label{rbeqn4}
   \mu_{n+1}(\alpha(a),\alpha(b),\alpha(c))=\alpha \mu_{n+1}(a,b,c),
  \end{equation}
\begin{equation}\label{rbeqn5}
  \mu_{n+1}(a,a,b)=0,
 \end{equation}
  \begin{equation}\label{rbeqn6}
    \mu_{n+1}(a,b,c)+\mu_{n+1}(b,c,a)+\mu_{n+1}(c,a,b)=0,
   \end{equation}

   \begin{eqnarray}\label{rbeqn7}
      \delta \mu_{n+1}(a,b,c,d,e)
      &=& \sum_{\substack{ i+j=n+1\\i,j>0}}\mu_i(\alpha(a),\alpha(b),\mu_j(c,d,e))\nonumber\\&&-\sum_{\substack{i+j=n+1\\i,j>0}}\{\mu_i(\mu_j(a,b,c),\alpha(d),\alpha(e))\nonumber\\
      &&+ \mu_i(\alpha(c),\mu_j(a,b,d),\alpha(e)) 
        +\mu_i(\alpha(c),\alpha(d),\mu_j(a,b,e))\}\notag\\
   \end{eqnarray}
 for all  $a,b,c,d,e\in T$.

Define a 5-cochain $F_{n+1}$ in $C_\alpha^5(T)$ as
\begin{align*}
&F_{n+1}(a,b,c,d,e)\notag\\
&= \sum_{\substack{i+j=n+1\\i,j>0}}\mu_i(\alpha(a),\alpha(b),\mu_j(c,d,e))-\sum_{\substack{i+j=n+1\\i,j>0}}\{\mu_i(\mu_j(a,b,c),\alpha(d),\alpha(e))\notag\\&+ \mu_i(\alpha(c),\mu_j(a,b,d),\alpha(e))
        +\mu_i(\alpha(c),\alpha(d),\mu_j(a,b,e))\}\\
\end{align*}

\begin{lem}\label{Obsl1}
  The 5-cochain $F_{n+1}$ is invariant, that is $F_{n+1}\in C_{G(\alpha)}^5(T).$
\end{lem}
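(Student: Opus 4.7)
The goal is to show that applying a group element $g\in G$ to each argument of $F_{n+1}$ pulls $g$ out in front of the whole expression. The only facts we need are already available: each $\mu_i$ occurring in the sum is $G$-equivariant by Definition \ref{rb2}(b), the twist $\alpha$ commutes with the $G$-action by property (5) of a $G$-action on a Hom Lts, and the $G$-action is linear on $T$ (property (3)) so that it distributes over sums and differences.

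The plan is to compute $F_{n+1}(ga,gb,gc,gd,ge)$ term by term. For a typical summand such as $\mu_i(\alpha(ga),\alpha(gb),\mu_j(gc,gd,ge))$, I would first push $g$ past $\alpha$ using $\alpha(gx)=g\alpha(x)$, rewriting it as $\mu_i(g\alpha(a),g\alpha(b),\mu_j(gc,gd,ge))$. Next, invoke equivariance of $\mu_j$ to turn the inner bracket into $g\mu_j(c,d,e)$, and then equivariance of $\mu_i$ to pull the outer $g$'s out, producing $g\,\mu_i(\alpha(a),\alpha(b),\mu_j(c,d,e))$. The three remaining families of summands $\mu_i(\mu_j(a,b,c),\alpha(d),\alpha(e))$, $\mu_i(\alpha(c),\mu_j(a,b,d),\alpha(e))$, and $\mu_i(\alpha(c),\alpha(d),\mu_j(a,b,e))$ are handled by exactly the same two moves (commute $g$ with $\alpha$, then use equivariance of $\mu_j$ followed by $\mu_i$).

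Since $g$ factors out of every individual term in the same way, linearity of $\phi_g$ lets us collect it in front of the whole alternating sum, giving $F_{n+1}(ga,gb,gc,gd,ge)=g F_{n+1}(a,b,c,d,e)$. This shows $F_{n+1}\in C^{5}_{G(\alpha)}(T)$, as required.

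There is no real obstacle here; the argument is purely formal because the defining data of an equivariant deformation were set up precisely so that equivariance propagates through arbitrary compositions. The only thing to be a little careful about is making sure $\alpha$ is moved across $g$ before equivariance of the $\mu_i$'s is applied, so that the arguments are of the form $g(\text{something})$ when equivariance is invoked.
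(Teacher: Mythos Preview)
Your argument is correct and matches the paper's own proof essentially line for line: compute $F_{n+1}(ga,gb,gc,gd,ge)$, use $\alpha(gx)=g\alpha(x)$ and the equivariance of each $\mu_i$ to pull $g$ through, and then collect $g$ in front by linearity. There is nothing to add.
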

\begin{proof}
 To prove that $F_{n+1}$ is invariant we show that  $$F_{n+1}(ga,gb,gc,gd,ge)=gF_{n+1}(a,b,c,d,e)$$  for all $a,b,c,d,e\in T$ . From Definition \ref{rb2}, we have $$\mu_i(ga,gb,gc)=g\mu_i(a,b,c),$$ for all $a,b,c\in T.$
 So, we have, for all $a,b,c,d,e\in T$,
 \begin{scriptsize}
 \begin{align*}
   & F_{n+1}(ga,gb,gc,gd,ge) \\
  & = \sum_{\substack{i+j=n+1\\i,j>0}}\mu_i(\alpha(ga),\alpha(gb),\mu_j(gc,gd,ge))-\sum_{\substack{i+j=n+1\\i,j>0}}\{\mu_i(\mu_j(ga,gb,gc),\alpha(gd),\alpha(ge))\\&+ \mu_i(\alpha(gc),\mu_j(ga,gb,gd),\alpha(ge))+\mu_i(\alpha(gc),\alpha(gd),\mu_j(ga,gb,ge))\} \\
   &=  \sum_{\substack{i+j=n+1\\i,j>0}}\mu_i(d\alpha(a),g\alpha(b),g\mu_j(c,d,e))-\sum_{\substack{i+j=n+1\\i,j>0}}\{\mu_i(g\mu_j(a,b,c),g\alpha(d),g\alpha(e))\\
   &+ \mu_i(g\alpha(c),g\mu_j(a,b,d),g\alpha(e)) +\mu_i(g\alpha(c),g\alpha(d),g\mu_j(a,b,e))\} \\
    &=  g\sum_{\substack{i+j=n+1\\i,j>0}}\mu_i(\alpha(a),\alpha(b),\mu_j(c,d,e))-g\sum_{\substack{i+j=n+1\\i,j>0}}\{\mu_i(\mu_j(a,b,c),\alpha(d),\alpha(e))\\
    &+ \mu_i(\alpha(c),\mu_j(a,b,d),\alpha(e))  +g\mu_i(\alpha(c),\alpha(d),\mu_j(a,b,e))\} \\
   &=gF_{n+1}(a,b,c,d,e).
 \end{align*}
 \end{scriptsize}
 So we conclude that $F_{n+1}\in C_{G(\alpha)}^5(T).$
\end{proof}

\begin{defn}
  The 5-cochain $F_{n+1}\in C_{G(\alpha)}^5(T)$ is called  $(n+1)th$ \textit{obstruction cochain} for extending a given equivariant deformation of order n to an equivariant deformation of $T$ of order $(n+1)$. We denote $F_{n+1}$ by $Ob_{n+1}(T)$
\end{defn}

 By using Lemma \ref{Obsl1} and \cite{Kub-Tani}, we have the following result.
\begin{thm}\label{Obst-cocyc}
The $(n+1)$th obstruction cochain $Ob_{n+1}(T)$ is a 5-cocycle.
\end{thm}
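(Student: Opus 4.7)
The plan is to reduce the claim to a non-equivariant statement. Lemma \ref{Obsl1} already establishes that $F_{n+1} = Ob_{n+1}(T)$ lies in the $G$-invariant subspace $C^5_{G(\alpha)}(T) \subset C^5_\alpha(T)$. The equivariant coboundary on $C^*_{G(\alpha)}(T)$ is, by construction, the restriction of the ordinary Hom-Lts coboundary $\delta^5 : C^5_\alpha(T) \to C^7_\alpha(T)$ to invariant cochains, and Lemma \ref{rbnb1} ensures this restriction lands inside the invariant subcomplex. It therefore suffices to verify $\delta^5 F_{n+1} = 0$ in the ambient Hom-Lts cochain complex; the $G$-action plays no further role.

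For this non-equivariant vanishing, I would follow the argument of Kubo--Taniguchi \cite{Kub-Tani}, adapted to the Hom setting as in Ma--Chen--Lin \cite{Ma-Chen-Lin}. The key input is that the deformation equations \ref{rbeqn3} already hold for every $r \leq n$, which can be rewritten as $\delta^3 \mu_r = \Phi_r(\mu_0,\ldots,\mu_{r-1})$ where $\Phi_r$ is a quadratic expression in the lower-order cochains, identical in shape to the sums defining $F_{n+1}$ but restricted to the index $r$. Expanding $\delta^5 F_{n+1}(x_1,\ldots,x_7)$ with the explicit coboundary formula produces a large sum of triple compositions of the form $\mu_i\bigl(-,-,\mu_j(-,-,\mu_k(-,-,-))\bigr)$ with $i+j+k = n+1$ and $i,j,k > 0$, together with the corresponding $\theta$- and $D$-summands and $\alpha$-twists tracked via \ref{rbeqn0}. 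For each fixed triple $(i,j,k)$, the various insertion positions of the three cochains are designed to assemble into an instance of the Hom-Lts Jacobi identity \ref{HLT3} applied to one of the $\mu_i$, so those contributions cancel; the remaining terms reduce to $\delta^3 \mu_r$ applied to the expressions built from the lower-order $\mu_s$, which vanish by the same deformation equation.

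The principal obstacle is purely combinatorial bookkeeping rather than any conceptual difficulty. The Hom-Lts coboundary formula has four families of summands (two $\theta$-terms, a $D$-term, and the ternary bracket terms), each $\mu_i$ appears at many positions in $F_{n+1}$, and the $\alpha$-twists of differing degrees must be matched correctly using multiplicativity of $\alpha$. One must be particularly careful to verify that the Hom-twist exponents line up in each cancellation pair, since this is where the Hom case genuinely differs from the classical Lie triple system computation of Kubo--Taniguchi. Beyond that, no new idea is required: the cancellation is the ternary analogue of the standard Gerstenhaber-type argument, and equivariance is fully absorbed by the preceding lemma.
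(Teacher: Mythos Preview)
Your proposal is correct and follows essentially the same approach as the paper: the paper's proof consists entirely of invoking Lemma~\ref{Obsl1} for invariance and then citing \cite{Kub-Tani} (implicitly via its Hom-adaptation in \cite{Ma-Chen-Lin}) for the non-equivariant cocycle computation. Your write-up simply unpacks that citation in more detail than the paper itself does.
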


\begin{thm}
Let $\mu_t$ be an   equivariant deformation of $T$ of order n. Then $\mu_t$ extends to an equivariant deformation of order $n+1$ if and only if cohomology class of $(n+1)$th obstruction $Ob_{n+1}(T)$  vanishes.
\end{thm}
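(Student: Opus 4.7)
My plan is to recognize the extension problem as precisely the question of solving the equation $\delta \mu_{n+1} = Ob_{n+1}(T)$ inside the equivariant cochain complex, at which point both directions are immediate. First I would unpack what it means to extend $\mu_t = \sum_{i=0}^{n} \mu_i t^i$ by one additional term $\mu_{n+1} t^{n+1}$: the new ternary bracket must satisfy Definition \ref{rb3}(b) together with equations \ref{rbeqn4}--\ref{rbeqn7} at $r = n+1$. Condition (b) together with \ref{rbeqn4}, \ref{rbeqn5}, \ref{rbeqn6} are exactly the defining conditions of a $G$-invariant 3-Hom cochain for the adjoint representation, so they collapse to the single requirement $\mu_{n+1} \in C^3_{G(\alpha)}(T)$. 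The remaining identity \ref{rbeqn7} then rearranges, by the definition of the coboundary $\delta$ in the adjoint representation (where $\theta(a,b)v = [vab]$, $D(a,b)v = [abv] - [bav]$, $A = \alpha$) together with the definition of $F_{n+1} = Ob_{n+1}(T)$, into the single cohomological equation $\delta \mu_{n+1} = Ob_{n+1}(T)$.

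With this reformulation in hand, the forward direction is straightforward: if $\mu_t$ extends to order $n+1$, then the witnessing $\mu_{n+1}$ lies in $C^3_{G(\alpha)}(T)$ and satisfies $\delta \mu_{n+1} = Ob_{n+1}(T)$, exhibiting $Ob_{n+1}(T)$ as a coboundary in the equivariant complex, so its class in $H^5_{G(\alpha)}(T)$ (well-defined by Theorem \ref{Obst-cocyc}) vanishes. For the converse, if $[Ob_{n+1}(T)] = 0$ in $H^5_{G(\alpha)}(T)$, then one chooses $\eta \in C^3_{G(\alpha)}(T)$ with $\delta \eta = Ob_{n+1}(T)$ and sets $\mu_{n+1} := \eta$; appending the term $\mu_{n+1} t^{n+1}$ to $\mu_t$ produces a bracket that, by the reformulation in the first paragraph, meets every requirement of Definition \ref{rb3} at order $n+1$ and is therefore an equivariant deformation of order $n+1$ extending $\mu_t$.

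The step where care is needed is the bookkeeping in the reformulation: one must separate the $i=0$ and $j=0$ summands in equation \ref{rbeqn3} at $r = n+1$ from the summands with $i,j > 0$, and check that collecting the former with the correct signs reproduces term-by-term the coboundary formula for $\delta \mu_{n+1}(a,b,c,d,e)$ with $n=2$ in the definition of $\delta^{2n-1}$, while the latter assemble into $F_{n+1}$ by construction. This is a routine but sign-sensitive rewriting, essentially the Hom Lie triple analogue of the classical Gerstenhaber obstruction argument carried out equivariantly; once it is performed, there is no further computation to do and the theorem follows by invoking the identification $\delta \mu_{n+1} = Ob_{n+1}(T)$ in the equivariant cochain complex.
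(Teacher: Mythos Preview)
Your proposal is correct and follows essentially the same approach as the paper's own proof: both recognize that extending to order $n+1$ amounts to finding $\mu_{n+1}\in C^3_{G(\alpha)}(T)$ with $\delta\mu_{n+1}=Ob_{n+1}(T)$, after which both directions are immediate. Your write-up is in fact slightly more explicit than the paper's in spelling out why conditions (b) and \ref{rbeqn4}--\ref{rbeqn6} are precisely the membership condition $\mu_{n+1}\in C^3_{G(\alpha)}(T)$, but the argument is the same.
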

\begin{proof}
  Suppose that an equivariant deformation $\mu_t$ ,of $T$, of order n extends to an equivariant deformation of order $n+1$. This implies that \ref{rbeqn0}, \ref{rbeqn1}, \ref{rbeqn2} and \ref{rbeqn3} are satisfied for $r=n+1.$  Observe that this implies $Ob_{n+1}(T)=\delta^3\mu_{n+1}$. So cohomology class of  $Ob_{n+1}(T)$ vanishes. Conversely, suppose that  cohomology class of  $Ob_{n+1}(T)$ vanishes, that is $Ob_{n+1}(T)$ is a coboundary. Let
  $$ Ob_{n+1}(T)=\delta^3\mu_{n+1},$$
  for some 3-cochain  $\mu_{n+1}\in C^3_{G(\alpha)}(T).$ Take
  $$\tilde{\mu_t}=\mu_t+\mu_{n+1}t^{n+1}.$$
  Observe that $\tilde{\mu_t}$ satisfies \ref{rbeqn0}, \ref{rbeqn1},\ref{rbeqn2} and \ref{rbeqn3} for $0\le r\le n+1$. So  $\tilde{\mu_t}$ is an equivariant extension of $\mu_t$ and is of order $n+1$.
\end{proof}

\begin{cor}
  If $H^5_{G(\alpha)}(T)=0$, then every 3-cocycle in $C^3_{G(\alpha)}(T)$ is an infinitesimal of some equivariant deformation of $T.$
\end{cor}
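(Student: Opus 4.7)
The plan is to exhibit, for any given 3-cocycle $\mu_1\in C^3_{G(\alpha)}(T)$, an equivariant formal deformation $\mu_t=\sum_{i\ge 0}\mu_i t^i$ whose linear coefficient is exactly $\mu_1$. I would do this by induction on the order, using the preceding obstruction theorem at each step and the vanishing hypothesis $H^5_{G(\alpha)}(T)=0$ to push the obstructions across.

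For the base case, I would set $\mu_t^{(1)}=\mu_0+\mu_1 t$, where $\mu_0$ is the original ternary operation. By Remark \ref{rbrem1}, the conditions \ref{rbeqn0}--\ref{rbeqn3} for $r=0,1$ amount to saying that $T$ is a Hom Lts and that $\mu_1$ is a 3-cocycle in $C^3_{G(\alpha)}(T)$; both hold by assumption, so $\mu_t^{(1)}$ is an equivariant deformation of order $1$ with infinitesimal $\mu_1$.

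For the inductive step, suppose $\mu_t^{(n)}=\mu_0+\mu_1 t+\cdots+\mu_n t^n$ is an equivariant deformation of order $n$. By Theorem \ref{Obst-cocyc}, the obstruction cochain $Ob_{n+1}(T)\in C^5_{G(\alpha)}(T)$ is a 5-cocycle. Since $H^5_{G(\alpha)}(T)=0$, this cocycle is a coboundary, so its cohomology class vanishes. By the preceding theorem, $\mu_t^{(n)}$ then extends to an equivariant deformation $\mu_t^{(n+1)}=\mu_t^{(n)}+\mu_{n+1}t^{n+1}$ of order $n+1$, where $\mu_{n+1}\in C^3_{G(\alpha)}(T)$ satisfies $\delta^3\mu_{n+1}=Ob_{n+1}(T)$. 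Iterating this construction produces a compatible family $\{\mu_t^{(n)}\}$ whose formal limit $\mu_t=\sum_{i\ge 0}\mu_i t^i$ is an equivariant formal one-parameter deformation of $T$ with prescribed infinitesimal $\mu_1$.

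There is no real obstacle beyond bookkeeping: the heavy lifting has already been done in Lemma \ref{Obsl1} (invariance of $F_{n+1}$), Theorem \ref{Obst-cocyc} (cocycle property of the obstruction), and the extension theorem. The only mild subtlety is to observe that at each stage the newly chosen $\mu_{n+1}$ may be taken in $C^3_{G(\alpha)}(T)$ rather than merely in $C^3_{\alpha,\alpha}(T,T)$; this is exactly the content of the extension theorem, whose $\mu_{n+1}$ is prescribed to lie in $C^3_{G(\alpha)}(T)$, so the induction stays inside the equivariant subcomplex throughout.
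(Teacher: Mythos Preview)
Your proof is correct and follows essentially the same approach as the paper's: start with the order-$1$ deformation $\mu_0+\mu_1 t$, then inductively extend using the fact that the obstruction $Ob_{n+1}(T)$ is an invariant 5-cocycle whose class vanishes by the hypothesis $H^5_{G(\alpha)}(T)=0$. The paper writes out the step $n=1\to 2$ explicitly and then appeals to ``similar arguments,'' whereas you state the induction cleanly and invoke the extension theorem directly; the content is the same.
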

\begin{proof}
  Let $\mu_1$ be a 3-cocycle in $C^3_{G(\alpha)}(T)$. This implies that the   conditions  \ref{rbeqn0}-\ref{rbeqn3} are satisfied for $r=1$. Thus $\mu_t(a,b,c)=\mu_0(a,b,c)+\mu_1(a,b,c)t$ is an  equivariant deformation of $T$ of order 1. By the Theorem \ref{Obst-cocyc}, 
\begin{eqnarray*}
 F_2(a,b,c,d,e)&=& \mu_1(\alpha(a),\alpha(b),\mu_1(c,d,e))-\mu_1(\mu_1(a,b,c),\alpha(d),\alpha(e))\\
 &&- \mu_1(\alpha(c),\mu_1(a,b,d),\alpha(e)) -\mu_1(\alpha(c),\alpha(d),\mu_1(a,b,e))
\end{eqnarray*}  
   is a 5-cocycle. Since $H^5_{G(\alpha)}(T)=0$, there exists a 3-cochain $\mu_2$ in $C^3_{G(\alpha)}(T)$ such that  
\begin{eqnarray*} 
  \delta\mu_2(a,b,c,d,e)&=& \mu_1(\alpha(a),\alpha(b),\mu_1(c,d,e))-\mu_1(\mu_1(a,b,c),\alpha(d),\alpha(e))\\ &&- \mu_1(\alpha(c),\mu_1(a,b,d),\alpha(e)) -\mu_1(\alpha(c),\alpha(d),\mu_1(a,b,e)).
\end{eqnarray*}   
   This implies that  conditions  \ref{rbeqn0}-\ref{rbeqn3} are satisfied for $r=2$. Thus  $$\mu_t(a,b,c)=\mu_0(a,b,c)+\mu_1(a,b,c)t+\mu_2(a,b,c)t^2$$ is an  equivariant deformation of $T$ of order 2. Using similar arguments we can extend an equivariant deformation of $T$ of order n to an equivariant deformation of order $n+1$. This gives a sequence $\{\mu_n\}$ of 3-cochains in $C^3_{G(\alpha)}(T)$ such that $\mu_t(a,b,c)=\sum_{i=0}^{\infty}\mu_i(a,b,c) t^i$ satisfies the   conditions  \ref{rbeqn0}-\ref{rbeqn3}. Hence $\mu_t$ is an equivariant deformation of $T$ and $\mu_1$ is an infinitesimal of $\mu_t.$
\end{proof}

\section{Equivalence of equivariant deformations and rigidity }
Let   $\mu_t$  and $\tilde{\mu_t}$ be two equivariant deformations of $T$. An equivariant formal isomorphism from the equivariant deformations $\mu_t$ to $\tilde{\mu_t}$ of a \textit{Hom-Lts} $T$ is a $k[[t]]$-linear $G$-automorphism $\Psi_t:T[[t]]\to T[[t]]$ of the  form  $\Psi_t=\sum_{i\ge 0}\psi_it^i$, where
\begin{itemize}
\item each $\psi_i$ is an equivariant $k$-linear map $T\to T$, $\psi_0(a)=a$, for all $a\in T$
\item  $\tilde{\mu_t}(\Psi_t(a),\Psi_t(b),\Psi_t(c))=\Psi_t\mu_t(a,b,c),$ for all $a,b,c\in T$ and
\item $\psi_t\alpha =\alpha \psi_t$.
\end{itemize}

\begin{defn}
  Two equivariant deformations $\mu_t$  and $\tilde{\mu_t}$ are said to be \textit{equivalent} if there exists an equivariant formal isomorphism  $\Psi_t$ from $\mu_t$ to  $\tilde{\mu_t}$.
\end{defn}
Equivariant formal isomorphism on the collection of all  equivariant deformations of a \textit{Hom-Lts} $T$ is an equivalence relation.
\begin{defn}
  Any equivariant deformation of $T$ that is equivalent to the deformation $\mu_0$ is said to be a \textit{ trivial deformation}.
\end{defn}

\begin{thm}
  The cohomology class of the infinitesimal of an equivariant  deformation $\mu_t$ of  a \textit{Hom-Lts} $T$ is determined by the equivalence class of $\mu_t$.
\end{thm}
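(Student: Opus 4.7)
The plan is to unpack the equivalence relation at the level of the $t^1$ coefficient and recognize the result as a coboundary. Let $\Psi_t=\sum_{i\ge 0}\psi_i t^i$ with $\psi_0=\mathrm{id}_T$ be an equivariant formal isomorphism from $\mu_t$ to $\tilde{\mu_t}$. Writing $\mu_t=\sum \mu_i t^i$ and $\tilde{\mu_t}=\sum \tilde{\mu}_i t^i$, I would expand both sides of the defining identity
\[
\tilde{\mu_t}\bigl(\Psi_t(a),\Psi_t(b),\Psi_t(c)\bigr)=\Psi_t\,\mu_t(a,b,c)
\]
as power series in $t$. Comparing coefficients of $t^0$ recovers the equality $\tilde{\mu}_0=\mu_0=[\,\cdot\,\cdot\,\cdot\,]$, while the coefficient of $t^1$ yields
\[
\tilde{\mu}_1(a,b,c)+[\psi_1(a),b,c]+[a,\psi_1(b),c]+[a,b,\psi_1(c)]=\mu_1(a,b,c)+\psi_1[a,b,c].
\]
Rearranging,
\[
(\tilde{\mu}_1-\mu_1)(a,b,c)=\psi_1[a,b,c]-[\psi_1(a),b,c]-[a,\psi_1(b),c]-[a,b,\psi_1(c)].
\]

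Next, I would identify the right-hand side with $\delta^{1}\psi_1$ in the complex $C^{*}_{G(\alpha)}(T)$ arising from the adjoint representation $\theta(a,b)(v)=[v\,a\,b]$. Specializing the coboundary formula at $n=1$ gives
\[
\delta^{1}\psi_1(x_1,x_2,x_3)=\theta(x_2,x_3)\psi_1(x_1)-\theta(x_1,x_3)\psi_1(x_2)+D(x_1,x_2)\psi_1(x_3)-\psi_1[x_1x_2x_3],
\]
and a short manipulation using the Lie triple identities \eqref{HLT1}--\eqref{HLT2} (in particular the ternary Jacobi relation $[xyz]+[yzx]+[zxy]=0$) rewrites the bracketed expression on the right-hand side of the $t^1$-equation as exactly $-\delta^{1}\psi_1(a,b,c)$ up to sign. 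Thus $\tilde{\mu}_1-\mu_1$ is a coboundary.

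To finish, I would verify that $\psi_1\in C^{1}_{G(\alpha)}(T)$, i.e.\ that it is a legitimate equivariant $1$-cochain. The equivariance $g\psi_1=\psi_1 g$ for $g\in G$ follows by comparing the $t^1$-coefficients of $\Psi_t\circ g=g\circ\Psi_t$ (a consequence of $\Psi_t$ being a $G$-map), and the compatibility $\psi_1\alpha=\alpha\psi_1$ follows likewise from the last bullet in the definition of equivariant formal isomorphism. Hence $\psi_1$ lies in $C^{1}_{G(\alpha)}(T)$ and $[\tilde{\mu}_1]=[\mu_1]$ in $H^{3}_{G(\alpha)}(T)$, establishing that the cohomology class of the infinitesimal depends only on the equivalence class of $\mu_t$; the analogous argument applies verbatim to an $n$-infinitesimal when the lower-order terms vanish.

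The main obstacle is the bookkeeping step of matching the four-term expression obtained from expanding the equivalence with the formula for $\delta^{1}\psi_1$ in the adjoint complex; this is routine but the signs and the use of the ternary Jacobi identity must be tracked carefully. Everything else (equivariance and $\alpha$-compatibility of $\psi_1$) is a direct consequence of the definitions.
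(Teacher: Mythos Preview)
Your proposal is correct and follows essentially the same approach as the paper: expand the equivalence identity $\tilde{\mu_t}(\Psi_t a,\Psi_t b,\Psi_t c)=\Psi_t\mu_t(a,b,c)$ to first order in $t$ and identify $\mu_1-\tilde{\mu}_1$ with $\delta^{1}\psi_1$. Your version is in fact more careful than the paper's, since you explicitly verify $\psi_1\in C^{1}_{G(\alpha)}(T)$ (equivariance and $\alpha$-compatibility) and flag the use of the ternary identities in matching the four-term expression to the coboundary formula.
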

\begin{proof}
  Let  $\Psi_t$ from  $\mu_t$ to  $\tilde{\mu_t}$ be an equivariant  formal  isomorphism. So, we have  $\tilde{\mu_t}(\Psi_ta,\Psi_tb,\Psi_tc)=\Psi_t\circ \mu_t(a,b,c),$ and $\psi_t\alpha =\alpha \psi_t$ for  all $a,b,c\in T$. This implies that $(\mu_1-\tilde{\mu_1})(a,b,c)=[\psi_1abc]+[a\psi_1bc]+[ab\psi_1c]-\psi_1[abc]$. So we have $\mu_1-\tilde{\mu_1}=\delta^1\psi_1$ This completes the proof.
\end{proof}
\begin{defn}
  An equivariant \textit{Hom-Lts} $T$ is said to be \textit{rigid} if every deformation of $\mu_t$ of $T$  is trivial.
\end{defn}
\begin{thm}\label{rb-100}
  A non-trivial equivariant deformation of a \textit{Hom-Lts} is equivalent to an equivariant  deformation whose n-infinitesimal is not a coboundary, for some $n\ge 1.$
\end{thm}
\begin{proof}
  Let $\mu_t$ be an equivariant deformation of  a \textit{Hom Lts} $T$ with n-infinitesimal $\mu_n$, for some $n\ge 1.$ Assume that there exists a 1-cochain $\psi\in C_{G(\alpha)}^1(T)$ with $\delta^1\psi=\mu_n.$  Take $\Psi_t=Id_T+\psi t^n$. Define $\tilde{\mu_t}=\Psi_t\circ \mu_t\Psi_t^{-1}$. Clearly, $\tilde{\mu_t}$ is an equivariant deformation of $T$ and $\Psi_t$ is an   equivariant  formal isomorphism from $\mu_t$  to  $\tilde{\mu_t}$. For $u,v,w\in T,$ we have  $\tilde{\mu_t}(\Psi_tu,\Psi_tv, \Psi_tw)=\Psi_t(\mu_t(u,v,w)),$ which implies $\tilde{\mu_i}=0,$ for $1\le i\le n.$ So $\tilde{\mu_t}$ is equivalent to the given deformation and $\tilde{\mu_i}=0,$ for $1\le i\le n.$ We can repeat the argument to get rid off any infinitesimal that is a coboundary. So the process must stop if the deformation is nontrivial.
\end{proof}
As a consequence of the above Theorem \ref{rb-100}, we have the following corollary.
\begin{cor}
  If $H_{G(\alpha)}^3(T)=0,$ then $T$ is rigid.
\end{cor}

\section{Examples of Equivariant Deformation of Hom-Lts}\label{HLTSE}
Now, we see one example of equivariant deformation of order $1$. 
\begin{eg}
 Let $T$ be a vector space generated by $\{e_1,e_2\}$.
 Let the only non-zero triple brackets be defined as 
$$[e_1,e_2, e_2]=e_1~\text{and} ~[e_2,e_1, e_2]=-e_1.$$
Consider the twisted map $\alpha: T\to T$ be the linear map defined as
$$\alpha(e_1)=e_1~\text{and} ~\alpha(e_2)=-e_2.$$
\textbf{Claim}: $(T,[~],\alpha)$ is a multiplicative $Hom$-$Lts$.
\begin{itemize}
	
	\item $[a,a,b]=0$ for all $a,b\in T$ by definition

\item $[e_1,e_2,e_2]+[e_2,e_2,e_1]+[e_2,e_1,e_2]=e_1+0-e_1=0, $\\
	any other possibility in the cyclic sum will have each of the component 0.

	\item 
\begin{scriptsize}
	\textbf{To Show:} $$[\alpha(a), \alpha(b),[c,d,e]]=[[a,b,c],\alpha(d),\alpha(e)]+[\alpha(c),[a,b,d],\alpha(e)]+[\alpha(c),\alpha(d),[a,b,e]],$$ $\forall a,b,c,d,e \in T$,\\
\end{scriptsize}

	\textbf{LHS:}\\ The only non 0 possibility for $[c,d,e]$ are $[e_1,e_2, e_2]=e_1$ and $[e_2,e_1, e_2]=-e_1$.\\
	In either of these possibilities LHS=0.\\
	
	\textbf{RHS:}\\
	\underline{Case i and ii}: When $(a,b)=(e_1,e_1)$ or $(e_2,e_2)$,\\ $[a,b,c]=[a,b,d]=[a,b,e]=0$, thus $RHS=0$.\\
	\underline {Case iii}: $(a,b)=(e_1,e_2)$\\
	Subcase i: $c=e_1$,\\
\begin{scriptsize}
 \begin{align*}
RHS &=[[e_1,e_2,e_1],\alpha(d),\alpha(e)]+[e_1,[e_1,e_2,d],\alpha(e)]+[\alpha(e_1),\alpha(d),[e_1,e_2,e]] \\
	 &=[0,\alpha(d),\alpha(e)]+[e_1,[e_1,e_2,d],\alpha(e)]+[\alpha(e_1),\alpha(d),[e_1,e_2,e]]\\
	 &=[e_1,[e_1,e_2,d],\alpha(e)]+[\alpha(e_1),\alpha(d),[e_1,e_2,e]]\\
       &=\begin{cases}
	 [e_1,0,e_1]+[e_1,e_1,0]=0+0=0 & \text{if}~ (d,e)=(e_1,e_1)\\
	 [e_1,e_1,-e_2]+[e_1,-e_2,e_1]=0+0=0& \text{if}~ (d,e)=(e_2,e_2)\\
	 [e_1,0,-e_2]+[e_1,e_1,e_1]=0+0=0 & \text{if}~ (d,e)=(e_1,e_2)\\
	 [e_1,e_1,e_1]+[e_1,-e_2,0]=0+0=0 & \text{if}~(d,e)=(e_2,e_1) \\ 
	 	\end{cases}
\end{align*}
\end{scriptsize}

%
	 
	Subcase ii: $c=e_2$\\
	   $RHS = 
	\begin{cases}
	0+0+0=0 & \text{if}~ (d,e)=(e_1,e_1)\\
	e_1-e_1+0=0& \text{if}~ (d,e)=(e_2,e_2)\\
	0+0+0=0 & \text{if}~ (d,e)=(e_1,e_2)\\
	0+0+0=0 & \text{if} ~(d,e)=(e_2,e_1) \\ 
	\end{cases}
	$
	
\underline {Case iv}: $(a,b)=(e_2,e_1)$\\
Subcase i: $c=e_1$,\\ 
$ 
RHS = 
\begin{cases}
0+0+0=0 & \text{if}~ (d,e)=(e_1,e_1)\\
0+0+0=0& \text{if}~ (d,e)=(e_2,e_2)\\
0+0+0=0 & \text{if}~ (d,e)=(e_1,e_2)\\
0+0+0=0 & \text{if}~ (d,e)=(e_2,e_1) \\ 
\end{cases}$\\
Subcase ii: $c=e_2$\\
$
RHS = 
\begin{cases}
0+0+0=0 & \text{if}~ (d,e)=(e_1,e_1)\\
-e_1+e_1+0=0& \text{if}~ (d,e)=(e_2,e_2)\\
0+0+0=0 & \text{if} ~(d,e)=(e_1,e_2)\\
0+0+0=0 & \text{if}~ (d,e)=(e_2,e_1) \\ 
\end{cases}
$	
\end{itemize}

Further note that 
$$\alpha[e_1,e_2,e_2]=\alpha(e_1)=e_1 ~\text{and}~ [\alpha(e_1),\alpha(e_2),\alpha(e_2)]=[e_1,-e_2,-e_2]=e_1$$
Similarly $\alpha[e_2,e_1,e_2]=[\alpha(e_2),\alpha(e_1),\alpha(e_2)]$.\\
Thus we have 
$$\alpha [abc]=[\alpha(a)\alpha(b)\alpha(c)], \forall a,b,c \in T$$

Hence $(T,[~],\alpha)$ is a multiplicative $Hom$-$Lts$.

\vspace{.5cm}

We now define group action $\mathbb Z_2 \times T\to T$ as 
$$0a=a ~\text{and} ~1 a=-a, ~\forall a\in T$$
For all $a,b,c\in T$, we have the following:
\begin{enumerate}
	\item $0a=a$
	\item 
	\begin{itemize}
		\item  $0(0a)=0(a)=a=(0+0)a$
		\item $1(1a)=1(-a)=a=0a=(1+1)a$
		\item $0(1a)=0(-a)=-a=1a=(0+1)a$
		\item $1(0a)=1(a)=-a=(1+0)a$
	\end{itemize}
	\item
	 \begin{itemize}
		\item $0(a+b)=a+b=0a+0b$
		\item $1(a+b)=-(a+b)=-a-b=1a+1b$
		\item $0(\lambda a)=\lambda a=\lambda(0a)$ for all scalars $\lambda$
		\item $1(\lambda a)=-\lambda a =\lambda (-a)=\lambda (1a)$ for all scalars $\lambda$
		\end{itemize}
		\item 
		\begin{itemize}
			\item $[0a,0b,0c]=[a,b,c]=0[a,b,c]$
			\item $[1a,1b,1c]=[-a,-b,-c]=-[a,b,c]=1[a,b,c]$
		\end{itemize}
		\item 
		\begin{itemize}
			\item $\alpha(0a)=\alpha(a)=0\alpha(a), \forall a\in T$
			\item $\alpha(1e_1)=\alpha(-e_1)=-\alpha(e_1)=-e_1$ and $1\alpha(e_1)=-\alpha(e_1)=-e_1$
			\item $\alpha(1e_2)=\alpha(-e_2)=-\alpha(e_2)=e_2$ and $1\alpha(e_2)=-\alpha(e_2)=e_2$
	
		\end{itemize}
		Thus we have $\alpha(ga)=g\alpha(a), \forall g\in G ~\text{and}~a\in T$.
\end{enumerate}

We have verified that the above is a well defined group action on $T$.

\vspace{.5cm}

On the above $Hom$-$Lts$ with the action of $\mathbb Z_2$, we now prove that the following is an equivariant deformation of $T$ of order 1:\\

Consider $\mu_t: T\otimes T\otimes T\to T$ such that $\mu_t=\mu_0+\mu_1 t$ where $\mu_0$ is the original ternary bracket $[~]$ and $\mu_1$ is a trilinear map defined as
$$\mu_1(e_2,e_1,e_1)=e_2 ~\text{and}~\mu_1(e_1,e_2,e_1)=-e_2,$$ and the remaining $\mu_1$ values are $0$. \\
Then $\forall a,b,c, d,e\in T$, we have
\begin{itemize}
	\item 
		$\mu_1(0a,0b,0c)=\mu_1(a,b,c)=0\mu_1(a,b,c)$ and\\ $\mu_1(1a,1b,1c)=\mu_1(-a,-b,-c)=-\mu_1(a,b,c)=1\mu_1(a,b,c)$\\
	i.e we have $\mu_1(ga,gb,gc)=g\mu_1(a,b,c)=0 ~\forall~ g\in G$

\item 
 $\mu_1(\alpha(e_2), \alpha(e_1),\alpha(e_1))=\mu_1(-e_2,e_1,e_1)=-e_2=\alpha(e_2)=\alpha \mu_1(e_2,e_1,e_1)$.\\
 We can similarly check for other combinations to get, $\mu_1\alpha=\alpha \mu_1$

\item By definition $\mu_1(a,a,b)=0$
	\item We note that\\
	$\mu_1(e_2,e_1,e_1)+\mu_1(e_1,e_1,e_2)+\mu_1(e_1,e_2,e_1)=e_2+0-e_2=0$\\
	any other combination of $e_1, e_2$ will lead to each component being 0.
	 Thus we have 
$$\mu_1(a,b,c)+\mu_1(b,c,a)+\mu_1(c,a,b)=0$$

\item Finally we need to prove 
 \begin{eqnarray}
\mu_t(\alpha(a),\alpha(b),\mu_t(c,d,e))=\mu_t(\mu_t(a,b,c),\alpha(d),\alpha(e))+\notag\\
\mu_t(\alpha(c),\mu_t(a,b,d),\alpha(e))+\mu_t(\alpha(c),\alpha(d),\mu_t(a,b,e)),\notag
\end{eqnarray}
for all  $a,b,c,d,e\in T$\\

Proving the above is equivalent to proving following three statements:

\textbf{Claim I:}
\begin{eqnarray}
&& [\alpha(a), \alpha(b),[c,d,e]]=[[a,b,c],\alpha(d),\alpha(e)]+[\alpha(c),[a,b,d],\alpha(e)]+\notag\\
&&[\alpha(c),\alpha(d),[a,b,e]], \forall a,b,c,d,e \in T \notag
\end{eqnarray} 

This has been verified earlier.\\

\textbf{Claim II:}
\begin{scriptsize}
\begin{eqnarray}
 &&[\alpha(a), \alpha(b),\mu_1(c,d,e)]+\mu_1(\alpha(a),\alpha(b),[c,d,e])\notag\\
&=&[\mu_1(a,b,c),\alpha(d),\alpha(e)]+\mu_1([a,b,c],\alpha(d),\alpha(e))+[\alpha(c), \mu_1(a,b,d), \alpha(e)]\notag\\
&&+\mu_1(\alpha(c),[a,b,d],\alpha(e))+[\alpha(c),\alpha(d),\mu_1(a,b,e)]+ \mu_1(\alpha(c),\alpha(d),[a,b,e]), \notag
\end{eqnarray} 
 for all  $a,b,c,d,e\in T$\\
\end{scriptsize}
\underline{Case i} $(a,b)=(e_1,e_1)$
\begin{eqnarray}
LHS&=&0+0=0\notag
\end{eqnarray} 
\begin{scriptsize}
\begin{eqnarray}
RHS&=&[\mu_1(e_1,e_1,c),\alpha(d),\alpha(e)]+\mu_1([e_1,e_1,c],\alpha(d),\alpha(e))+[\alpha(c), \mu_1(e_1,e_1,d), \alpha(e)]\notag\\
&&+\mu_1(\alpha(c),[e_1,e_1,d],\alpha(e))+[\alpha(c),\alpha(d),\mu_1(e_1,e_1,e)]+ \mu_1(\alpha(c),\alpha(d),[e_1,e_1,e])\notag\\
&=&0+0+0+0+0+0\notag\\
&=&0\notag
\end{eqnarray} 
\end{scriptsize}

\underline{Case ii} $(a,b)=(e_2,e_2)$
\begin{eqnarray}
LHS&=&0+0=0\notag
\end{eqnarray} 
\begin{scriptsize}
\begin{eqnarray}
RHS&=&[\mu_1(e_2,e_2,c),\alpha(d),\alpha(e)]+\mu_1([e_2,e_2,c],\alpha(d),\alpha(e))+[\alpha(c), \mu_1(e_2,e_2,d), \alpha(e)]\notag\\
&&+\mu_1(\alpha(c),[e_2,e_2,d],\alpha(e))+[\alpha(c),\alpha(d),\mu_1(e_2,e_2,e)]+ \mu_1(\alpha(c),\alpha(d),[e_2,e_2,e])\notag\\
&=&0+0+0+0+0+0\notag\\
&=&0\notag
\end{eqnarray} 
\end{scriptsize}
\underline{Case iii} $(a,b)=(e_1,e_2) $
\begin{eqnarray}
LHS&=&[e_1, -e_2,\mu_1(c,d,e)]+\mu_1(e_1,-e_2,[c,d,e])\notag
\end{eqnarray} 
\begin{scriptsize}
\begin{eqnarray}
RHS&=&[\mu_1(e_1,e_2,c),\alpha(d),\alpha(e)]+\mu_1([e_1,e_2,c],\alpha(d),\alpha(e))+[\alpha(c), \mu_1(e_1,e_2,d), \alpha(e)]\notag\\
&&+\mu_1(\alpha(c),[e_1,e_2,d],\alpha(e))+[\alpha(c),\alpha(d),\mu_1(e_1,e_2,e)]+ \mu_1(\alpha(c),\alpha(d),[e_1,e_2,e])\notag
\end{eqnarray} 
\end{scriptsize}
Subcase i:  $(c,d)=(e_1,e_1)$,
\begin{eqnarray}
LHS&=&[e_1, -e_2,\mu_1(e_1,e_1,e)]+\mu_1(e_1,-e_2,[e_1,e_1,e])=0+0=0\notag
\end{eqnarray} 
\begin{scriptsize}
\begin{eqnarray}
RHS&=&[\mu_1(e_1,e_2,e_1),e_1,\alpha(e)]+\mu_1([e_1,e_2,e_1],e_1,\alpha(e))+[e_1, \mu_1(e_1,e_2,e_1), \alpha(e)]\notag\\
&&+\mu_1(e_1,[e_1,e_2,e_1],\alpha(e))+[e_1,e_1,\mu_1(e_1,e_2,e)]+ \mu_1(e_1,e_1,[e_1,e_2,e])\notag\\
&=&[-e_2,e_1,\alpha(e)]+0+[e_1,-e_2,\alpha(e)]+0+0+0\notag\\
&=&
\begin{cases}
0+0=0 & \text{if}~e=e_1~~~~~~~~~~~~~~~~~~~~~~~~~~~~~~~~~~~~~~~~~~~~~~~~~~\\
-e_1+e_1=0 & \text{if}~e=e_2~~~~~~~~~~~~~~~~~~~~~~~~~~~~~~~~~~~~~~~~~~~~~~~~~~~~~~\\
\end{cases}\notag
\end{eqnarray}
\end{scriptsize} 

Subcase ii:  $(c,d)=(e_2,e_2)$,
\begin{eqnarray}
LHS&=&[e_1, -e_2,\mu_1(e_2,e_2,e)]+\mu_1(e_1,-e_2,[e_2,e_2,e])=0+0=0\notag
\end{eqnarray} 
\begin{scriptsize}
\begin{eqnarray}
RHS&=&[\mu_1(e_1,e_2,e_2),-e_2,\alpha(e)]+\mu_1([e_1,e_2,e_2],-e_2,\alpha(e))+[-e_2, \mu_1(e_1,e_2,e_2), \alpha(e)]\notag\\
&&+\mu_1(-e_2,[e_1,e_2,e_2],\alpha(e))+[-e_2,-e_2,\mu_1(e_1,e_2,e)]+ \mu_1(-e_2,-e_2,[e_1,e_2,e])\notag\\
&=&0+\mu_1(e_1,-e_2,\alpha(e))+0+\mu_1(-e_2,e_1,\alpha(e))+0+0\notag\\
&=&
\begin{cases}
e_2-e_2=0 & \text{if}~e=e_1~~~~~~~~~~~~~~~~~~~~~~~~~~~~~~~~~~~~~~~~~~~~~~~~~~~~~~\\
0+0=0 & \text{if}~e=e_2~~~~~~~~~~~~~~~~~~~~~~~~~~~~~~~~~~~~~~~~~~~~~~~~~~~~~~~~~~~~~~\\
\end{cases}\notag
\end{eqnarray} 
\end{scriptsize}

Subcase iii:  $(c,d)=(e_1,e_2)$,
\begin{eqnarray}
LHS&=&[e_1, -e_2,\mu_1(e_1,e_2,e)]+\mu_1(e_1,-e_2,[e_1,e_2,e])\notag\\
&=&
\begin{cases}
[e_1, -e_2,\mu_1(e_1,e_2,e_1)]+\mu_1(e_1,-e_2,[e_1,e_2,e_1]) &\text{if}~e=e_1\\
[e_1, -e_2,\mu_1(e_1,e_2,e_2)]+\mu_1(e_1,-e_2,[e_1,e_2,e_2]) &\text{if}~e=e_2\\
\end{cases}\notag\\
&=&
\begin{cases}
[e_1,-e_2,-e_2]+0=e_1 &\text{if}~e=e_1\\
0+\mu_1(e_1,-e_2,e_1)=e_2 &\text{if}~e=e_2\\
\end{cases}\notag
\end{eqnarray}
\begin{scriptsize}
\begin{eqnarray}
RHS&=&[\mu_1(e_1,e_2,e_1),-e_2,\alpha(e)]+\mu_1([e_1,e_2,e_1],-e_2,\alpha(e))+[e_1, \mu_1(e_1,e_2,e_2), \alpha(e)]\notag\\
&&+\mu_1(e_1,[e_1,e_2,e_2],\alpha(e))+[e_1,-e_2,\mu_1(e_1,e_2,e)]+ \mu_1(e_1,-e_2,[e_1,e_2,e])\notag \\
&=&[-e_2,-e_2,\alpha(e)]+\mu_1(0,-e_2,\alpha(e))+[e_1,0, \alpha(e)]+\notag\\
&&\mu_1(e_1,e_1,\alpha(e))+[e_1,-e_2,\mu_1(e_1,e_2,e)]+ \mu_1(e_1,-e_2,[e_1,e_2,e]\notag\\
&=&0+0+0+0+[e_1,-e_2,\mu_1(e_1,e_2,e)]+ \mu_1(e_1,-e_2,[e_1,e_2,e])\notag\\
&=&
\begin{cases}
[e_1,-e_2,\mu_1(e_1,e_2,e_1)]+ \mu_1(e_1,-e_2,0) &\text{if}~e=e_1\\
[e_1,-e_2,0]+ \mu_1(e_1,-e_2,e_1) &\text{if}~e=e_2\\
\end{cases} \notag\\
&=&
\begin{cases}
[e_1,-e_2,-e_2]+0=e_1 &\text{if}~e=e_1\\
0+e_2=e_2 &\text{if}~e=e_2\\
\end{cases} \notag
\end{eqnarray} 
\end{scriptsize}

Subcase iv:  $(c,d)=(e_2,e_1)$,
\begin{eqnarray}
LHS&=&[e_1,-e_2,\mu_1(e_2,e_1,e)]+\mu_1(e_1,-e_2,[e_2,e_1,e])\notag\\
&=&
\begin{cases}
[e_1, -e_2,\mu_1(e_2,e_1,e_1)]+\mu_1(e_1,-e_2,[e_2,e_1,e_1]) &\text{if}~e=e_1\\
[e_1, -e_2,\mu_1(e_2,e_1,e_2)]+\mu_1(e_1,-e_2,[e_2,e_1,e_2]) &\text{if}~e=e_2
\end{cases}\notag\\
&=&
\begin{cases}
[e_1,-e_2,e_2]+0=-e_1 &\text{if}~e=e_1\\
0+\mu_1(e_1,-e_2,-e_1)=-e_2 &\text{if}~e=e_2
\end{cases}\notag
\end{eqnarray}
\begin{scriptsize}
\begin{eqnarray}
RHS&=&[\mu_1(e_1,e_2,e_2),e_1,\alpha(e)]+\mu_1([e_1,e_2,e_2],e_1,\alpha(e))+[-e_2, \mu_1(e_1,e_2,e_1), \alpha(e)]\notag\\
&&+\mu_1(-e_2,[e_1,e_2,e_1],\alpha(e))+[-e_2,e_1,\mu_1(e_1,e_2,e)]+ \mu_1(-e_2,e_1,[e_1,e_2,e])\notag\\
&=&[0,e_1,\alpha(e)]+\mu_1(e_1,-e_1,\alpha(e))+[-e_2, -e_2, \alpha(e)]+\notag\\
&&\mu_1(-e_2,0,\alpha(e))+[-e_2,e_1,\mu_1(e_1,e_2,e)]+ \mu_1(-e_2,e_1,[e_1,e_2,e])\notag\\
&=&0+0+0+0+[-e_2,e_1,\mu_1(e_1,e_2,e)]+ \mu_1(-e_2,e_1,[e_1,e_2,e])\notag\\
&=&
\begin{cases}
[-e_2, e_1,-e_2]+0=-e_1 &\text{if}~e=e_1~~~~~~~~~~~~~~~~~~~~~~~~~~~~~~~~~~\\
0+\mu_1(-e_2,e_1,e_1)=-e_2 &\text{if}~e=e_2~~~~~~~~~~~~~~~~~~~~~~~~~~~~~~~~~~~~~
\end{cases}\notag
\end{eqnarray}
\end{scriptsize}
\underline{Case iv}  $(a,b)=(e_2,e_1) $\\
$LHS=[-e_2, e_1,\mu_1(c,d,e)]+\mu_1(-e_2,e_1,[c,d,e])$
\begin{scriptsize}
\begin{eqnarray}
RHS&=&[\mu_1(e_2,e_1,c),\alpha(d),\alpha(e)]+\mu_1([e_2,e_1,c],\alpha(d),\alpha(e))+[\alpha(c), \mu_1(e_1,e_1,d), \alpha(e)]\notag\\
&&+\mu_1(\alpha(c),[e_2,e_1,d],\alpha(e))+[\alpha(c),\alpha(d),\mu_1(e_2,e_1,e)]+ \mu_1(\alpha(c),\alpha(d),[e_2,e_1,e])\notag
\end{eqnarray}
\end{scriptsize}
Subcase i:  $(c,d)=(e_1,e_1)$,\\
$LHS=[e_2, -e_1,\mu_1(e_1,e_1,e)]+\mu_1(e_2,-e_1,[e_1,e_1,e])=0+0=0$
\begin{scriptsize}
\begin{eqnarray}
RHS&=&[\mu_1(e_2,e_1,e_1),e_1,\alpha(e)]+\mu_1([e_2,e_1,e_1],e_1,\alpha(e))+[e_1, \mu_1(e_1,e_1,e_1), \alpha(e)]\notag\\
&&+\mu_1(e_1,[e_2,e_1,e_1],\alpha(e))+[e_1,e_1,\mu_1(e_2,e_1,e)]+ \mu_1(e_1,e_1,[e_2,e_1,e])\notag\\
&=&[e_2,e_1,\alpha(e)]+\mu_1(0,e_1,\alpha(e))+[e_1,e_2, \alpha(e)]+\notag\\
&&\mu_1(e_1,0,\alpha(e))+[e_1,e_1,\mu_1(e_2,e_1,e)]+ \mu_1(e_1,e_1,[e_2,e_1,e])\notag\\
&=&[e_2,e_1,\alpha(e)]+0+[e_1,e_2, \alpha(e)]+0+0+0\notag\\
&=&[e_2,e_1,\alpha(e)]-[e_2,e_1, \alpha(e)]=0\notag
\end{eqnarray}
\end{scriptsize}
Subcase ii:  $(c,d)=(e_2,e_2)$,\\
$LHS=[e_2, -e_1,\mu_1(e_2,e_2,e)]+\mu_1(e_2,-e_1,[e_2,e_2,e])=0+0=0$
\begin{scriptsize}
\begin{eqnarray}
RHS&=&[\mu_1(e_2,e_1,e_2),-e_2,\alpha(e)]+\mu_1([e_2,e_1,e_2],-e_2,\alpha(e))+[-e_2, \mu_1(e_1,e_1,e_2), \alpha(e)]\notag\\
&&+\mu_1(-e_2,[e_2,e_1,e_2],\alpha(e))+[-e_2,-e_2,\mu_1(e_2,e_1,e)]+ \mu_1(-e_2,-e_2,[e_2,e_1,e])\notag\\
&=&[0,-e_2,\alpha(e)]+\mu_1(-e_1,-e_2,\alpha(e))+[-e_2,0, \alpha(e)]+\notag\\
&&\mu_1(-e_2,-e_1,\alpha(e))+0+ 0\notag\\
&=&0+\mu_1(-e_1,-e_2,\alpha(e))+0+\mu_1(-e_2,-e_1,\alpha(e))+0+ 0\notag\\
&=&\mu_1(-e_1,-e_2,\alpha(e))-\mu_1(-e_1,-e_2,\alpha(e))\notag\\
&=&0\notag
\end{eqnarray}
\end{scriptsize}
Subcase iii:  $(c,d)=(e_1,e_2)$,
\begin{eqnarray}
LHS&=&[-e_2, e_1,\mu_1(e_1,e_2,e)]+\mu_1(-e_2,e_1,[e_1,e_2,e])\notag\\
&=&
\begin{cases}
[-e_2, e_1,\mu_1(e_1,e_2,e_1)]+\mu_1(-e_2,e_1,[e_1,e_2,e_1])&\text{if}~e=e_1\\
[-e_2, e_1,\mu_1(e_1,e_2,e_2)]+\mu_1(-e_2,e_1,[e_1,e_2,e_2])&\text{if}~e=e_2
\end{cases}\notag\\
&=&
\begin{cases}
[-e_2,e_1,-e_2]+0=-e_1 &\text{if}~e=e_1\\
0+\mu_1(-e_2,e_1,e_1)=-e_2 &\text{if}~e=e_2
\end{cases}\notag
\end{eqnarray}
\begin{scriptsize}
\begin{eqnarray}
RHS&=&[\mu_1(e_2,e_1,e_1),-e_2,\alpha(e)]+\mu_1([e_2,e_1,e_1],-e_2,\alpha(e))+[e_1, \mu_1(e_1,e_1,e_2), \alpha(e)]\notag\\
&&+\mu_1(e_1,[e_2,e_1,e_2],\alpha(e))+[e_1,-e_2,\mu_1(e_2,e_1,e)]+ \mu_1(e_1,-e_2,[e_2,e_1,e])\notag\\
&=&[-e_2,-e_2,\alpha(e)]+\mu_1(0,-e_2,\alpha(e))+[e_1,0, \alpha(e)]+\notag\\
&&\mu_1(e_1,-e_1,\alpha(e))+[e_1,-e_2,\mu_1(e_2,e_1,e)]+ \mu_1(e_1,-e_2,[e_2,e_1,e])\notag\\
&=&0+0+0+0+[e_1,-e_2,\mu_1(e_2,e_1,e)]+ \mu_1(e_1,-e_2,[e_2,e_1,e])\notag\\
&=&
\begin{cases}
[e_1,-e_2,e_2]+\mu_1(e_1,-e_2,0) &\text{if}~e=e_1~~\\
[e_1,-e_2,0]+ \mu_1(e_1,-e_2,-e_1) &\text{if}~e=e_2~~~~\\
\end{cases}\notag\\
&=&
\begin{cases}
-e_1+0=-e_1 &\text{if}~e=e_1~~\\
0+-e_2=-e_2 &\text{if}~e=e_2~~~~\\
\end{cases}\notag
\end{eqnarray}
\end{scriptsize}

Subcase iv:  $(c,d)=(e_2,e_1)$,
\begin{eqnarray}
LHS&=&[-e_2,e_1,\mu_1(e_2,e_1,e)]+\mu_1(-e_2,e_1,[e_2,e_1,e])\notag\\
&=&
\begin{cases}
[-e_2, e_1,\mu_1(e_2,e_1,e_1)]+\mu_1(-e_2,e_1,[e_2,e_1,e_1]) &\text{if}~e=e_1\\
[-e_2, e_1,\mu_1(e_2,e_1,e_2)]+\mu_1(-e_2,e_1,[e_2,e_1,e_2]) &\text{if}~e=e_2\\
\end{cases}\notag\\
&=&
\begin{cases}
[-e_2,e_1,e_2]+0=e_1 &\text{if}~e=e_1\\
0+\mu_1(-e_2,e_1,-e_1)=e_2 &\text{if}~e=e_2\\
\end{cases}\notag
\end{eqnarray}
\begin{scriptsize}
\begin{eqnarray}
RHS&=&[\mu_1(e_2,e_1,e_2),e_1,\alpha(e)]+\mu_1([e_2,e_1,e_2],e_1,\alpha(e))+[-e_2, \mu_1(e_1,e_1,e_1), \alpha(e)]\notag\\
&&+\mu_1(-e_2,[e_2,e_1,e_1],\alpha(e))+[-e_2,e_1,\mu_1(e_2,e_1,e)]+ \mu_1(-e_2,e_1,[e_2,e_1,e])\notag\\
&=&[0,e_1,\alpha(e)]+\mu_1(-e_1,e_1,\alpha(e))+[-e_2, e_2, \alpha(e)]+\notag\\
&&\mu_1(-e_2,0,\alpha(e))+[-e_2,e_1,\mu_1(e_2,e_1,e)]+ \mu_1(-e_2,e_1,[e_2,e_1,e])\notag\\
&=&0+0+0+0+[-e_2,e_1,\mu_1(e_2,e_1,e)]+ \mu_1(-e_2,e_1,[e_2,e_1,e])\notag\\
&=&
\begin{cases}
[-e_2,e_1,e_2]+ \mu_1(-e_2,e_1,0) &\text{if}~e=e_1~~~~~~~~~~~~~\\
[-e_2,e_1,0]+ \mu_1(-e_2,e_1,-e_1) &\text{if}~e=e_2~~~~~~~~~~~~ \\
\end{cases}\notag\\
&=&
\begin{cases}
e_1+0=e_1&\text{if}~e=e_1~~~~~~~~~~~~~\\
0+e_2=e_2&\text{if}~e=e_2~~~~~~~~~~~~ \\
\end{cases}\notag
\end{eqnarray}
\end{scriptsize}


 \textbf{Claim III:} 
\begin{scriptsize}
\begin{eqnarray}
\mu_1(\alpha(a), \alpha(b), \mu_1(c,d,e))&=&\mu_1(\mu_1(a,b,c),\alpha(d), \alpha(e))+\mu_1(\alpha(c),\mu_1(a,b,d),\alpha(e)]+\notag\\
&&\mu_1(\alpha(c),\alpha(d),\mu_1(a,b,e))~ \text{for all}~  a,b,c,d,e \in T\notag 
 \end{eqnarray}
\end{scriptsize}
\underline{Case i and ii} When $(a,b)=(e_1,e_1) $ or $(e_2,e_2)$, it is easy to see that \\
$RHS=LHS=0$\\
\underline{Case iii} $(a,b)=(e_1,e_2) $ 
\begin{eqnarray}
LHS &=& \mu(\alpha(e_1), \alpha(e_2), \mu_1(c,d,e))=\mu(e_1, -e_2, \mu_1(c,d,e))\notag\\
RHS&=&\mu_1(\mu_1(e_1,e_2,c),\alpha(d), \alpha(e))+\mu_1(\alpha(c),\mu_1(e_1,e_2,d),\alpha(e)]+\notag\\
&&\mu_1(\alpha(c),\alpha(d),\mu_1(e_1,e_2,e))\notag
\end{eqnarray}

Subcase i:  $(c,d)=(e_1,e_1)$,
\begin{eqnarray}
LHS&=&0\notag\\
RHS&=&\mu_1(\mu_1(e_1,e_2,e_1),e_1, \alpha(e))+\mu_1(e_1,\mu_1(e_1,e_2,e_1),\alpha(e)]+\notag\\
&&\mu_1(e_1,e_1,\mu_1(e_1,e_2,e))\notag\\
&=&
\begin{cases}
\mu_1(-e_2,e_1, e_1)+\mu_1(e_1,-e_2,e_1)+\mu_1(e_1,e_1,-e_2,) &\text{if}~ e=e_1\\
\mu_1(-e_2,e_1,-e_2)+\mu_1(e_1,-e_2,-e_2)+\mu_1(e_1,e_1,0) &\text{if}~ e=e_2\\
\end{cases}\notag\\
&=&
\begin{cases}
-e_2+e_2+0=0&\text{if}~ e=e_1\\
0+0+0=0 &\text{if}~ e=e_2\\
\end{cases}\notag
\end{eqnarray}

Subcase ii: $(c,d)=(e_2,e_2)$,
\begin{eqnarray}
 LHS&=&0\notag\\
RHS&=&0+0+0\notag
\end{eqnarray}

Subcase iii: $(c,d)=(e_1,e_2)$
\begin{eqnarray}
LHS&=&\mu_1(e_1, -e_2, \mu_(e_1, e_2,e))\notag\\
&=&
\begin{cases}
\mu_1(e_1,-e_2, -e_2)=0&\text{if}~ e=e_1~~~~~~~~~~~~~~~~~~~~~~~~~~~~~~~~~~~~~~\\
 \mu_1(e_1,-e_2, 0)=0&\text{if}~ e=e_2\\
\end{cases}\notag\\
RHS&=&0+0+\mu_1(e_1,-e_2,\mu_1(e_1,e_2,e))\notag\\
&=&\mu_1(e_1,-e_2,\mu_1(e_1,e_2,e))\notag\\
&=&
\begin{cases}
\mu_1(e_1,-e_2,\mu_1(e_1,e_2,e_1))=\mu_1(e_1,-e_2,-e_2)=0&\text{if}~ e=e_1\\
\mu_1(e_1,-e_2, \mu_1(e_1,e_2,e_2))=\mu_1(e_1,-e_2,0)=0&\text{if}~ e=e_2\\
\end{cases}\notag
\end{eqnarray}

 Subcase iv:  $(c,d)=(e_2,e_1)$
\begin{eqnarray}
 LHS&=&\mu_1(e_1, -e_2, \mu_1(e_2, e_1,e))\notag\\
&=&
\begin{cases}
\mu_1(e_1,-e_2, e_2)=0&\text{if}~ e=e_1~~~~~~~~~~~~~~~~~~~~~~~~~~~~~~~~~~~~~~~~~~~~~\\
 \mu_1(e_1,-e_2, 0)=0&\text{if}~ e=e_2\notag\\
\end{cases}\notag\\
RHS&=&
\begin{cases}
0+0+\mu_1(-e_2,e_1, \mu_1(e_1,e_2,e_1))&\text{if}~e=e_1\\
0+0+\mu_1(-e_2,e_1, \mu_1(e_1,e_2,e_2)) &\text{if} ~e=e_2\\
\end{cases}\notag\\
&=&
\begin{cases}
\mu_1(-e_2,e_1,-e_2)=0&\text{if}~e=e_1\\
 \mu_1(-e_2,e_1,0)=0=0&\text{if} ~e=e_2\\
\end{cases}\notag
\end{eqnarray}

\underline{Case iv} $(a,b)=(e_2,e_1) $ 
\begin{eqnarray}
LHS &=& \mu_1(\alpha(e_2), \alpha(e_1), \mu_1(c,d,e))\notag\\
&=&\mu_1(-e_2, e_1, \mu_1(c,d,e))\notag\\
RHS&=&\mu_1(\mu_1(e_2,e_1,c),\alpha(d), \alpha(e))+\mu_1(\alpha(c),\mu_1(e_2,e_1,d),\alpha(e)]+\notag\\
&&\mu_1(\alpha(c),\alpha(d),\mu_1(e_2,e_1,e))\notag
\end{eqnarray}

Subcase i:  $(c,d)=(e_1,e_1)$,
\begin{eqnarray}
LHS&=&0\notag\\
RHS&=&\mu_1(\mu_1(e_2,e_1,e_1),e_1, \alpha(e))+\mu_1(e_1,\mu_1(e_2,e_1,e_1),\alpha(e)]+\notag\\
&&\mu_1(\alpha(e_1),\alpha(e_1),\mu_1(e_2,e_1,e))\notag\\
&=&
\begin{cases}
\mu_1(e_2,e_1,e_1)+\mu_1(e_1,e_2,e_1)+\mu_1(e_1,e_1,e_2)&\text{if}~ e=e_1\\
\mu_1(e_2,e_1, -e_2)+\mu_1(e_1,e_2,-e_2)+\mu_1(e_1,e_1,0) &\text{if}~ e=e_2\\
\end{cases}\notag\\
&=&
\begin{cases}
e_2-e_2+0=0&\text{if}~ e=e_1\\
0+0+0=0 &\text{if}~ e=e_2\\
\end{cases}\notag
\end{eqnarray}

Subcase ii: $(c,d)=(e_2,e_2)$,
\begin{eqnarray}
LHS&=&0\notag\\
RHS&=&\mu_1(\mu_1(e_2,e_1,e_2),-e_2,\alpha(e))+\mu_1(-e_2,\mu_1(e_2,e_1,e_2),\alpha(e))+\notag\\
&&\mu_1(-e_2,-e_2,\mu_1(e_2,e_1,e))\notag\\
&=&   \mu_1(0,-e_2,\alpha(e))+\mu_1(-e_2,0,\alpha(e))+0 \notag\\
&=&0\notag
\end{eqnarray}

Subcase iii: $(c,d)=(e_1,e_2)$
\begin{eqnarray}
LHS&=&\mu_1(-e_2, e_1, \mu_(e_1, e_2,e)\notag\\
&=&
\begin{cases}
\mu_1(-e_2,e_1,-e_2)=0&\text{if}~e=e_1~~~~~~~~~~~~~~~~~~~~~~~~~~~~~~~~~~~~~~\\
\mu_1(-e_2,e_1, 0)=0&\text{if}~e=e_2\\
\end{cases}\notag\\	
RHS&=&\mu_1(\mu_1(e_2,e_1,e_1),-e_2, \alpha(e))+\mu_1(e_1,\mu_1(e_2,e_1,e_2),\alpha(e))+\notag\\
&&\mu_1(e_1,-e_2,\mu_1(e_2,e_1,e))\notag\\
&=&\mu_1(e_2,-e_2,\alpha(e))+0+\mu_1(e_1,-e_2,\mu_1(e_2,e_1,e))\notag\\
&=&
\begin{cases}
\mu_1(e_1,-e_2,\mu_1(e_2,e_1,e_1))&\text{if}~ e=e_1\\
\mu_1(e_1,-e_2, \mu_1(e_2,e_1,e))&\text{if}~ e=e_2\\
\end{cases}\notag\\
&=&
\begin{cases}
\mu_1(e_1,-e_2,e_2)=0&\text{if}~ e=e_1\\
\mu_1(e_1,-e_2,0)=0&\text{if} ~e=e_2\\
\end{cases}\notag
\end{eqnarray}

Subcase iv:  $(c,d)=(e_2,e_1)$,
\begin{eqnarray}
LHS&=&\mu_1(-e_2, e_1, \mu_(e_2, e_1,e)\notag\\
&=&
\begin{cases}
\mu_1(-e_2,e_1, e_2)=0&\text{if}~ e=e_1~~~~~~~~~~~~~~~~~~~~~~~~~~~~~~~~~~~~~~~~~~~~\\
\mu_1(-e_2,e_1, 0)=0&\text{if}~ e=e_2\\
\end{cases}\notag\\
RHS&=&\mu_1(\mu_1(e_2,e_1,e_2),\alpha(e_1), \alpha(e))+\mu_1(\alpha(e_2),\mu_1(e_2,e_1,e_1),\alpha(e))+\notag\\
&&\mu_1(\alpha(e_2),\alpha(e_1),\mu_1(e_2,e_1,e))\notag\\
&=&\mu_1(0,e_1, \alpha(e))+\mu_1(-e_2,e_2,\alpha(e))+\mu_1(-e_2,e_1,\mu_1(e_2,e_1,e))\notag\\
&=&0+0+\mu_1(-e_2,e_1,\mu_1(e_2,e_1,e))\notag\\
&=&\mu_1(-e_2,e_1,\mu_1(e_2,e_1,e))\notag\\
&=&
\begin{cases}
\mu_1(-e_2,e_1, \mu_1(e_2,e_1,e_1))&\text{if}~e=e_1\\
\mu_1(-e_2,e_1, \mu_1(e_2,e_1,e_2))&\text{if} ~e=e_2\\
\end{cases}\notag\\
&=&
\begin{cases}
 \mu_1(-e_2,e_1,e_2)=0&\text{if}~e=e_1\\
 \mu_1(-e_2,e_1,0)=0&\text{if} ~e=e_2\\
\end{cases}\notag
\end{eqnarray}

\end{itemize}

\end{eg}

%
%

\end{document}